\begin{document}

\title{Equitable Coloring of Graphs \\ 
with Intermediate Maximum Degree}
\author{
Bor-Liang Chen\\
\normalsize Department of Business Administration\\
\normalsize National Taichung University of Science and Technology\\
\normalsize Taichung 404, Taiwan\\
\normalsize {\tt Email:blchen@ntit.edu.tw}
\and
Kuo-Ching Huang\thanks{Research supported by MOST  
(No. 103-2115-M-126-002).}\\
\normalsize Department of Financial and Computational Mathematics\\
\normalsize Providence University\\
\normalsize Shalu 43301, Taichung, Taiwan\\
\normalsize {\tt Email:kchuang@gm.pu.edu.tw}
\and
Ko-Wei Lih \thanks{Research supported by NSC 
(No. 102-2115-M-001-010).}\\
\normalsize Institute of Mathematics\\
\normalsize Academia Sinica\\
\normalsize Taipei 10617, Taiwan\\
\normalsize {\tt Email:makwlih@sinica.edu.tw}
}

\date{\small \today}

\maketitle

\newcommand{\eq}{\chi_{_{=}}}
\newcommand{\KG}{{\sf KG}}

\newtheorem{define}{Definition}
\newtheorem{proposition}[define]{Proposition}
\newtheorem{theorem}[define]{Theorem}
\newtheorem{lemma}[define]{Lemma}
\newtheorem{corollary}[define]{Corollary}
\newtheorem{problem}[define]{Problem}
\newtheorem{conjecture}{Conjecture}
%
%
\newenvironment{proof}{
\par
\noindent {\bf Proof.}\rm}%
{\mbox{}\hfill\rule{0.5em}{0.809em}\par}
\newenvironment{remark}{
\par
\noindent {\bf Remark.}\rm}

\baselineskip=19pt
\parindent=0.8cm

\begin{abstract}

\noindent
If the vertices of a graph $G$ are colored with $k$ colors such 
that no adjacent vertices receive the same color and the sizes of 
any two color classes differ by at most one, then $G$ is said to 
be equitably $k$-colorable. Let $|G|$ denote the number of vertices 
of $G$ and $\Delta=\Delta(G)$ the maximum degree of a vertex in $G$. 
We prove that a graph $G$ of order at least 6 is equitably 
$\Delta$-colorable if $G$ satisfies $(|G|+1)/3 \leqslant \Delta < 
|G|/2$ and none of its components is a $K_{\Delta +1}$.

\bigskip

\noindent
Keywords:\ {\em chromatic number; equitable coloring; 
equitable chromatic number; equitable chromatic threshold} 

\medskip

\noindent
Mathematical Subject Classification (MSC) 2010: 05C15
\end{abstract}

%
\section{Introduction}
%

A graph $G$ consists of a vertex set $V(G)$ and an edge set $E(G)$. 
All graphs considered in this paper are finite, loopless, and without 
multiple edges. Let $|G|$ denote the {\em order} of $G$, i.e., the number 
of vertices of $G$. A set of vertices of $G$ is called {\em independent} 
if its members are mutually non-adjacent. If the vertices of $G$ can be 
partitioned into $k$ subsets $V_1,V_2,\ldots ,V_k$ such that each $V_i$ 
is an independent set, then $G$ is said to be {\em $k$-colorable} and the 
$k$ sets are called {\em color classes}. Equivalently, a coloring can 
be viewed as a function $\pi: V(G)\rightarrow \{ 1,2,\ldots ,k\} $ such 
that adjacent vertices are mapped to distinct numbers. The mapping $\pi$ 
is said to be a (proper) {\em $k$-coloring}. All pre-images of a fixed $i$, 
$1 \leqslant i \leqslant k$, form a color class. The smallest integer $k$ 
such that $G$ is $k$-colorable is called the {\em chromatic number} of $G$ 
and is denoted by $\chi(G)$. The graph $G$ is said to be equitably colored 
with $k$ colors, or {\em equitably $k$-colorable}, if there is a $k$-coloring 
that satisfies the condition $| |V_i| - |V_j| | \leqslant 1$ for every pair 
of color classes $V_i$ and $V_j$. The smallest integer $k$ for which $G$ is 
equitably $k$-colorable is called the {\em equitable chromatic number} of 
$G$ and is denoted by $\eq(G)$. Clearly, $\chi(G) \leqslant \eq (G)$. Lih 
\cite{lih_new} provides a comprehensive survey of equitable coloring of graphs. 

Let $\deg_G(v)$, or $\deg(v)$ for short, denote the degree of vertex 
$v$ in $G$ and define $\Delta(G)=\max \{ \deg(v) \mid v\in V(G)\}$. 
We usually abbreviate $\Delta(G)$ to $\Delta$ when no ambiguity arises.
Let $K_n$, $P_n$ and $C_n$ denote, respectively, a complete graph, a path 
and a cycle on $n$ vertices. In 1978, Meyer \cite{m} proposed the following. 

\begin{conjecture}\label{ECC}
If a connected graph $G$ is different from a complete graph $K_n$ and an odd 
cycle $C_{2n+1}$ for any positive integer $n$, then $\eq(G)\leqslant \Delta$.
\end{conjecture}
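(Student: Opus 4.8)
The plan is to recast the statement as an exact-size partition problem and then solve it by a potential-decreasing exchange argument. Writing $n=|G|$ and $\Delta=\Delta(G)$, the hypotheses $(n+1)/3\leqslant\Delta<n/2$ are equivalent to $2\Delta+1\leqslant n\leqslant 3\Delta-1$, so $2<n/\Delta<3$ and every color class of an equitable $\Delta$-coloring must have size $2$ or $3$. Setting $a=n-2\Delta$ and $b=3\Delta-n$ we have $a,b\geqslant 1$, $a+b=\Delta$, and a proper $\Delta$-coloring is equitable precisely when it consists of $a$ classes of size $3$ and $b$ classes of size $2$. Passing to the complement, $\bar G$ has $\delta(\bar G)=n-1-\Delta>n/2-1$, and the task is exactly to partition $V(\bar G)$ into $a$ triangles and $b$ edges; the excluded $K_{\Delta+1}$ component corresponds to $\Delta+1$ vertices that are independent in $\bar G$ but joined to everything else, which manifestly cannot be covered by only $\Delta$ cliques. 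This complement picture is the guiding intuition, but I would carry out the argument on $G$ itself.

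First I would fix any partition $\mathcal P$ of $V(G)$ into $a$ triples and $b$ pairs and use as potential the number $c(\mathcal P)$ of edges lying inside a part; the coloring is proper, hence equitable, exactly when $c(\mathcal P)=0$. The engine is a size-preserving transposition: exchanging $x\in P$ with $z\in Q$ (for parts $P\neq Q$) alters only the internal edges of $P$ and $Q$, and a direct count shows the net change in $c$ equals $|N(z)\cap(P\setminus\{x\})|+|N(x)\cap(Q\setminus\{z\})|-|N(x)\cap(P\setminus\{x\})|-|N(z)\cap(Q\setminus\{z\})|$. In particular, if $x$ is an endpoint of an intra-part edge and I can choose $Q$ and $z\in Q$ with $x$ having no neighbour in $Q\setminus\{z\}$ and $z$ having no neighbour in $P\setminus\{x\}$, the swap strictly decreases $c$. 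Because $\deg(x)\leqslant\Delta<n/2$ and the parts are tiny, such hosts are plentiful: among the $\Delta-1$ parts other than $P$, at least $\lceil(\Delta-1)/2\rceil$ contain at most one neighbour of $x$, and each of these is a candidate $Q$ (take $z$ to be that lone neighbour, or any vertex of $Q$ if there is none). Since $c$ is a nonnegative integer that strictly decreases at every step, the process terminates.

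The hard part will be the analysis of terminal configurations with $c(\mathcal P)>0$ at which \emph{no} decreasing swap exists. The existence criterion above fails only if, for every endpoint $x$ of every intra-part edge and every candidate host $Q$, the forced partner $z$ has a neighbour among the one or two vertices of $P\setminus\{x\}$; I would show that this saturation, propagated across the many candidate hosts guaranteed by $\deg(x)<n/2$, forces the neighbours of $x$ and of its part-mates to fill out an entire clique, ultimately producing a component equal to $K_{\Delta+1}$. As such components are excluded by hypothesis, no conflict can survive and the terminal coloring is the desired equitable $\Delta$-coloring. The extremal situation to watch throughout is $G$ close to a disjoint union of two cliques of order about $n/2$ (equivalently $\bar G$ nearly complete bipartite), which is exactly where the no-$K_{\Delta+1}$ assumption is indispensable and where the blocking analysis is tightest; the tight distributions $b=1$ (so $n=3\Delta-1$, almost a triangle factor of $\bar G$) and $a=1$ (so $n=2\Delta+1$, almost a perfect matching) admit the fewest candidate swaps and would be verified separately, with the hypothesis $n\geqslant 6$ disposing of the finitely many small sporadic graphs.
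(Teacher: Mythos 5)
Two problems, one of scope and one of substance. First, the statement you are proving is Meyer's conjecture, which has no degree hypothesis; the bounds $(n+1)/3\leqslant\Delta<n/2$ that you invoke in your first sentence are the hypotheses of the paper's main theorem, not of the conjecture, so at best you are attempting the special case the paper actually proves (the conjecture itself remains open, and the paper claims only the intermediate range). Your reductions within that range are fine: the arithmetic $a=n-2\Delta$, $b=3\Delta-n$ is correct, the complement picture is a valid sanity check, and your net-change formula for a transposition is right. But your counting of ``plentiful hosts'' does not deliver a decreasing swap: choosing $Q$ with at most one neighbour of $x$ controls only the term $|N(x)\cap(Q\setminus\{z\})|$, while the term $|N(z)\cap(P\setminus\{x\})|$ is unconstrained, so even generically the swap may increase $c$. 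You concede this by deferring to the terminal-configuration analysis --- and that deferral is the gap: the claim that swap-saturation ``forces a component equal to $K_{\Delta+1}$'' is precisely the entire content of the paper's Theorem \ref{key3}, and you give no argument for it.

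There is a structural reason to doubt that your framework, as set up, can close this gap. The paper does \emph{not} fix the class profile at $a$ triples and $b$ pairs and run a local search on improper partitions; it works with proper $[r,s,t]$-colorings (singleton classes allowed), orders them by a maximality criterion (maximize triples, then pairs), proves via Lemma \ref{key1} and a three-case analysis that any maximal coloring has at most $\Delta$ classes, and only afterwards adjusts the profile (Lemmas \ref{key5} and \ref{key6}). The exchange moves needed there are much richer than single transpositions: repeatedly, unions of three or four classes such as $X_i\cup U_j\cup U_{j'}$ or $w_1\cup X_1\cup U_1\cup U_2$ are repartitioned wholesale into new independent sets of different sizes, and the thirteen structural statements of Lemma \ref{key1} exist exactly to rule out the stuck configurations one by one. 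Your size-preserving swaps cannot change the shape of the partition at all, so there is every reason to expect terminal configurations with $c(\mathcal P)>0$ that are far from $K_{\Delta+1}$ --- for instance in your own flagged tight cases $a=1$ and $b=1$, where almost no hosts exist. Until you carry out a classification of stuck configurations comparable in strength to Lemma \ref{key1} (or enlarge your move set so that the potential argument provably terminates only at proper colorings), the proposal is a plausible program, not a proof.
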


Meyer's conjecture, if true, is a generalization of the following theorem 
of Brooks \cite{brooks}.

\begin{theorem}
If a connected graph $G$ is different from a complete graph $K_n$ and an odd 
cycle $C_{2n+1}$ for any positive integer $n$, then $\chi (G)\leqslant \Delta$.
\end{theorem}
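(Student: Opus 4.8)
The plan is to prove $\chi(G)\leqslant\Delta$ by exhibiting, in each case, a vertex ordering for which the greedy (first-fit) colouring never needs more than $\Delta$ colours. Recall that greedy colouring assigns to each $v_i$ the least colour not already used on its neighbours among $v_1,\dots,v_{i-1}$; hence $v_i$ receives a colour in $\{1,\dots,\Delta\}$ whenever fewer than $\Delta$ of its neighbours are already coloured, or whenever two already-coloured neighbours share a colour. First I would dispose of $\Delta\leqslant 2$: a connected graph with $\Delta\leqslant 2$ is a path or a cycle, and after excluding $K_1,K_2$ and the odd cycles, every such graph is $2$-colourable, so the bound holds. Hence assume $\Delta\geqslant 3$.

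Next I would treat the case in which $G$ is \emph{not} $\Delta$-regular. Pick a vertex $v$ with $\deg(v)<\Delta$, take a spanning tree rooted at $v$, and order the vertices by decreasing distance from $v$, so that $v=v_n$ is last and every other vertex precedes at least one of its neighbours. Then for $i<n$ each $v_i$ has an as-yet-uncoloured neighbour, so at most $\Delta-1$ of its neighbours are coloured, while $v_n$ has degree at most $\Delta-1$; greedy colouring therefore uses at most $\Delta$ colours.

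The heart of the matter is the $\Delta$-regular, $2$-connected case, where I would isolate the configuration: three vertices $v_1,v_2,v_n$ with $v_1,v_2\in N(v_n)$, $v_1v_2\notin E(G)$, and $G-\{v_1,v_2\}$ connected. Given this, colour $v_1$ and $v_2$ both with colour $1$, order the remaining vertices of the connected graph $G-\{v_1,v_2\}$ so that $v_n$ is last and each other vertex precedes a neighbour, and run greedy colouring. Every intermediate vertex again has an uncoloured neighbour, while the final vertex $v_n$ has all $\Delta$ neighbours coloured but two of them ($v_1,v_2$) share colour $1$, so its neighbours occupy at most $\Delta-1$ colours and a free colour $\leqslant\Delta$ remains. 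To produce the configuration I would split on connectivity. If $G$ is $3$-connected, then since $G$ is not complete it has two non-adjacent vertices at distance $2$ sharing a common neighbour $v_n$; taking these as $v_1,v_2$ works, since deleting two vertices from a $3$-connected graph leaves it connected. If $\kappa(G)=2$, I would take $v_n$ in a minimal $2$-cut $\{v_n,u\}$, so that $u$ is a cut-vertex of the connected graph $G-v_n$, and choose $v_1,v_2$ to be neighbours of $v_n$ lying in two distinct leaf blocks of $G-v_n$ and distinct from those blocks' articulation vertices.

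Finally I would dispatch the regular graphs possessing a cut-vertex by peeling off a leaf block $H$ at its articulation vertex $v$: since $v$ has neighbours outside $H$, it has degree $<\Delta$ in $H$, so $H$ is non-regular and $\Delta$-colourable by the second paragraph; likewise $v$ has degree $<\Delta$ in $G'=G-(V(H)\setminus\{v\})$, which is therefore $\Delta$-colourable, and permuting the colours of one colouring so that $v$ agrees merges them into a $\Delta$-colouring of $G$. The step I expect to be the main obstacle is verifying, in the $\kappa(G)=2$ subcase, that $G-\{v_1,v_2\}$ is genuinely connected: here one must check that $v_n$ really has an admissible neighbour in two different leaf blocks and that removing $v_1,v_2$ keeps those blocks connected. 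Both points lean on $\Delta\geqslant 3$, which forces every leaf block to contain at least three vertices and hence to be itself $2$-connected, so that deleting a single non-articulation vertex cannot disconnect it.
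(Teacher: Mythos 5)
This statement is Brooks' Theorem, which the paper does not prove at all: it is stated as Theorem 2 purely as background, with a citation to Brooks (1941), so there is no in-paper argument to compare yours against. Judged on its own, your proposal is the classical greedy proof in the style of Lov\'{a}sz's short proof of Brooks' Theorem, and it is essentially correct and complete in outline: the $\Delta\leqslant 2$ reduction, the non-regular case via a spanning tree ordered by decreasing distance from a vertex of small degree, the key configuration ($v_1,v_2$ non-adjacent neighbours of $v_n$ with $G-\{v_1,v_2\}$ connected, $v_1,v_2$ pre-coloured alike, $v_n$ coloured last), the split into $\kappa(G)\geqslant 3$ versus $\kappa(G)=2$, and the cut-vertex case by splitting at a leaf block and permuting colours to agree at the articulation vertex all go through. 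Two small corrections to your own diagnosis of the $\kappa(G)=2$ subcase: the existence of a neighbour of $v_n$ in each leaf block of $G-v_n$ other than that block's articulation vertex rests on $G$ being $2$-connected (otherwise the articulation vertex would be a cut vertex of $G$), not on $\Delta\geqslant 3$; what $\Delta\geqslant 3$ plus regularity buys, exactly as you say, is that no leaf block is a $K_2$, hence every leaf block is $2$-connected and survives deletion of one non-articulation vertex, and additionally that $v_n$ (having degree $\Delta\geqslant 3$) retains a coloured-path connection in $G-\{v_1,v_2\}$, which is needed so that the ordering of $G-\{v_1,v_2\}$ rooted at $v_n$ exists. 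With those two points made explicit, the argument is sound; it is of course a far more elementary and self-contained route than the paper takes, since the paper simply imports the theorem as known.
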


Let $\eq^*(G)$ denote the smallest integer $m$ such that $G$ is equitably 
$k$-colorable for all $k\geqslant m$. We call $\eq^*(G)$ the {\em equitable
chromatic threshold} of $G$. The well-known Hajnal and Szemer\'{e}di Theorem 
\cite{hs} established the following for not necessarily connected graphs.

\begin{theorem} \label{haj-sze}
For a graph $G$, $\eq^*(G) \leqslant \Delta +1$. 
\end{theorem}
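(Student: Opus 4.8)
The plan is to prove the equivalent assertion that $G$ is equitably $r$-colorable for every $r \geq \Delta + 1$, which is exactly the statement $\eq^*(G) \leq \Delta + 1$. Since $r \geq \Delta + 1$ gives $\Delta(G) \leq r - 1$, it is enough to show that every graph $H$ with $\Delta(H) \leq r - 1$ has an equitable $r$-coloring. I would first reduce to the \emph{divisible} case $|H| = rs$. Given $G$ with $n$ vertices, put $s = \lceil n/r \rceil$ and $d = rs - n \in \{0, 1, \ldots, r-1\}$, and let $G'$ be the disjoint union of $G$ with a clique $K_d$ on $d$ new vertices. Then $|G'| = rs$ and $\Delta(G') \leq \max\{\Delta, d-1\} \leq r-1$. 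Granting the divisible case, $G'$ has an equitable $r$-coloring with all $r$ classes of size $s$; since the $d$ new vertices form a clique they occupy $d$ distinct classes, so deleting them leaves $d$ classes of size $s-1$ and $r - d$ of size $s$, an equitable $r$-coloring of $G$.

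For the divisible case I would induct on $|E(H)|$. If $H$ is edgeless any balanced partition works, so suppose $xy \in E(H)$. By induction $H - xy$ has an equitable $r$-coloring with every class of size $s$; if $x$ and $y$ get different colors this coloring is already proper for $H$ and we are done. The only hard case is that $x$ and $y$ share a class $V$. Provisionally delete $y$ from its class to obtain a proper \emph{near-equitable} coloring of $H$: one deficient class $V_0 = V \setminus \{y\}$ of size $s-1$, the remaining classes of size $s$, and one uncolored vertex $y$ with $\deg_H(y) \leq r-1$. It then suffices to prove the repair lemma: such a near-equitable coloring can always be converted into an equitable one.

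The engine of the repair is a sequence of single-vertex moves between classes, each preserving properness. If $y$ has no neighbor in the deficient class $V_0$, insert $y$ into $V_0$ and finish. Otherwise I would look for an \emph{augmenting sequence}: classes $V_0, V_{i_1}, \ldots, V_{i_t}$ and vertices $w_j \in V_{i_j}$ such that each $w_j$ has no neighbor in the preceding class and $y$ has no neighbor in the final class $V_{i_t}$. Cascading the moves (each $w_j$ into the preceding class, then $y$ into $V_{i_t}$) shifts the deficiency to a class able to absorb $y$ and yields an equitable coloring. To organize the search I would build the auxiliary digraph whose arcs record which vertices may legally move into which classes, classify classes by how many vertices they can donate to the growing reachable set, and grow a maximal set of classes reachable from $V_0$ by such moves.

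The main obstacle, and the genuine content of the Hajnal--Szemer\'{e}di Theorem, is the case where no augmenting sequence exists. Here single-vertex moves alone can fail, and one must argue that a maximal reachable configuration has such rigid neighborhood structure --- every vertex of the reached classes being joined to almost all of certain other classes --- that a careful count of edges between the reachable and non-reachable parts forces some vertex to have degree exceeding $r - 1$, contradicting $\Delta(H) \leq r-1$. Making this dichotomy precise requires an extremal choice of the near-equitable coloring (for instance, one minimizing a suitable weight on the trapped vertices) together with delicate bookkeeping to guarantee that the move process terminates and that the blocking structure truly violates the degree bound. I expect essentially all of the difficulty to reside in this structural counting step.
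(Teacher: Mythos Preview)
The paper does not prove this theorem at all: it is quoted as the classical Hajnal--Szemer\'{e}di Theorem with a citation to \cite{hs}, and is used as a black box (for instance, in the first stage of the proof of Theorem~\ref{key3}). So there is no ``paper's proof'' to compare against; any argument you give here is necessarily different from what the paper does.

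As for your sketch itself, the reduction to the divisible case via $G' = G \cup K_d$ is fine, and the edge-induction setup leading to a near-equitable coloring with one deficient class and one uncolored vertex of degree at most $r-1$ is the standard opening of the Kierstead--Kostochka style proofs. But you explicitly defer the entire substance of the theorem to the last paragraph: the ``repair lemma'' when no simple augmenting chain exists is precisely where the Hajnal--Szemer\'{e}di Theorem lives, and phrases like ``has such rigid neighborhood structure'' and ``delicate bookkeeping'' are not a proof. In the actual arguments one must introduce a carefully chosen potential (e.g., maximize the number of \emph{solo} vertices, or minimize a weighted count over the accessible classes), define accessible/terminal classes precisely, and carry out a double-count of edges between the accessible and inaccessible parts that genuinely forces a degree $\geq r$. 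None of that machinery appears here, so what you have is an accurate road map rather than a proof. If you intend to include a self-contained proof, you would need to supply that extremal-choice-plus-counting argument in full; otherwise, citing \cite{hs} (or the short proof of Kierstead and Kostochka) as the paper does is the appropriate course.
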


By definition, $\eq(G) \leqslant \eq^*(G)$. In fact, $\eq^*(G)$ may  
be greater than $\eq(G)$. For instance, the complete bipartite graph 
$K_{3,3}$ is equitably 2-colorable, but not equitably 3-colorable. 
In 1994, Chen, Lih and Wu \cite{clw} proposed the following conjecture.

\begin{conjecture}\label{EDCC}
Let $G$ be a connected graph. If $G$ is different from the complete 
graph $K_n$, the odd cycle $C_{2n+1}$, and the complete bipartite 
graph $K_{2n+1,2n+1}$ for any positive integer $n$, then $G$ is 
equitably $\Delta$-colorable.
\end{conjecture}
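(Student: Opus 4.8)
The plan is to proceed by induction on $|G|$, equivalently to examine a connected counterexample $G$ of minimum order that is none of the three excluded families $K_n$, $C_{2n+1}$, $K_{2n+1,2n+1}$. The natural starting point is Theorem \ref{haj-sze}: it already supplies an equitable $(\Delta+1)$-coloring of $G$, so the entire task reduces to \emph{saving a single color}, that is, to converting a balanced partition into $\Delta+1$ independent classes into a balanced partition into $\Delta$ independent classes. Since equitable colorability is not monotone in the number of colors (witness $K_{3,3}$), this reduction must be carried out directly; it cannot be inherited from the $(\Delta+1)$-coloring by a trivial merge of two classes, which in general destroys independence.

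I would then split the argument according to the size of $\Delta$ relative to $|G|$, because the shape of an equitable $\Delta$-coloring changes qualitatively. When $\Delta \geqslant |G|/2$, every class of an equitable $\Delta$-coloring has size $1$ or $2$, so such a coloring exists exactly when the complement $\overline{G}$ admits a matching of size $|G|-\Delta$. Here the problem becomes purely matching-theoretic and can be settled by a degree count of Tutte or K\"onig type; the only obstruction is a deficient complement, which forces $G$ to be complete or to be the balanced complete bipartite graph $K_{2n+1,2n+1}$ at the threshold $\Delta=|G|/2$. Thus the large-degree regime recovers precisely the $K_n$ and $K_{2n+1,2n+1}$ exceptions.

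In the intermediate regime $(|G|+1)/3 \leqslant \Delta < |G|/2$, where every class has size $2$ or $3$, the desired conclusion is exactly the main theorem established in this paper and may be invoked as a black box. The genuinely hard regime is $\Delta < |G|/3$, in which classes have size at least $3$ and no matching reformulation survives. Here I would develop a recoloring argument: take the equitable $(\Delta+1)$-coloring, select a smallest class, and try to empty it by transferring each of its vertices into a class that is not yet full, repairing any resulting conflict by a Kempe-chain interchange along a two-colored alternating path. The key lemma to isolate is a \emph{surplus-transfer} statement asserting that, unless the local structure around a vertex is extremal, one vertex of an over-represented class can always be pushed into a strictly smaller class along such a chain without opening a size gap larger than one; iterating it should collapse $\Delta+1$ classes down to $\Delta$.

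The main obstacle, and the reason Conjecture \ref{EDCC} remains open, is that these transfers must be controlled \emph{uniformly} across all degree ranges. A Kempe chain fails to close exactly when the neighborhood it traverses is saturated in the pattern of $K_{\Delta+1}$, an odd cycle, or $K_{2n+1,2n+1}$, and the delicate point is to prove that a connected graph avoiding all three of these \emph{globally} cannot present such a saturated configuration \emph{locally} at every attempted transfer. I expect the induction to be most fragile when deleting a vertex or contracting a transfer path creates a new component isomorphic to one of the excluded graphs; ruling out the simultaneous occurrence of these degenerate boundary cases is where the real difficulty concentrates, and the intermediate-degree theorem proved here is best seen as dispatching one band of this spectrum while the small-degree band resists the same techniques.
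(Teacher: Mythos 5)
The statement you set out to prove is, in this paper, precisely a \emph{conjecture} (Conjecture \ref{EDCC}): the paper offers no proof of it, only a proof of the special case $(|G|+1)/3 \leqslant \Delta < |G|/2$ (Theorem \ref{main}), the earlier case $\Delta \geqslant |G|/2$ being Theorem \ref{halfdelta} from \cite{clw}. Your proposal correctly maps this landscape --- large-degree regime via matchings in the complement, intermediate regime via the present paper, and it is fair to invoke both as known results --- but it does not constitute a proof, and you in effect concede as much when you write that the small-degree band ``resists the same techniques.'' The entire burden of the conjecture now sits in the regime $\Delta < |G|/3$, and there your argument consists of a single unproven assertion: the ``surplus-transfer'' lemma, stated in the subjunctive (``should collapse $\Delta+1$ classes down to $\Delta$''), with no proof, no precise hypothesis, and no analysis of when the Kempe-chain interchange actually terminates. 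This is not a repairable local gap; it is the open problem itself, relocated into a lemma.

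Two further points of caution about the sketch as written. First, Kempe-chain interchanges preserve properness but interact badly with class sizes: swapping along an alternating path of odd length changes the sizes of both classes involved, and keeping \emph{all} $\Delta+1$ classes within the equitable window while emptying one of them is exactly the combinatorial difficulty --- your claim that a failed chain forces a local $K_{\Delta+1}$, odd-cycle, or $K_{2n+1,2n+1}$ pattern is asserted, not derived, and no known argument delivers it in the range $\Delta < |G|/3$ (note the paper's own method is entirely different: it works with maximal $[r,s,t]$-colorings and counting arguments as in Lemma \ref{key1} and Theorem \ref{key3}, a technique whose counting inequalities such as $2\Delta \geqslant \deg(w_k)+\deg(w_{k'}) \geqslant 2(r+s+t-1)$ degenerate once classes of size $\geqslant 4$ appear, which is why it does not extend below $|G|/3$). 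Second, your induction on $|G|$ is never actually used: no vertex is deleted, no minimal counterexample property is exploited, and you yourself flag that deletion can create excluded components --- so the inductive frame is scaffolding without load. As a research program your taxonomy is sound and matches the literature; as a proof of Conjecture \ref{EDCC} it establishes nothing beyond what Theorems \ref{haj-sze}, \ref{halfdelta} and \ref{main} already give.
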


The conclusion of the above conjecture can be stated in an equivalent 
form $\eq^*(G)\leqslant \Delta$. It is also immediate to see that the 
Conjecture \ref{EDCC} implies Conjecture \ref{ECC}. Chen, Lih and Wu \cite{clw}
confirmed Conjecture \ref{EDCC} for the following special case. 

\begin{theorem}\label{halfdelta}
Let $G$ be a connected graph with $\Delta \geqslant |G|/2$. If $G$ is 
different from $K_n$ and $K_{2n+1,2n+1}$ for any positive integer $n$, 
then $G$ is equitably $\Delta$-colorable.
\end{theorem}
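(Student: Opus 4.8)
The plan is to exploit the hypothesis $\Delta \geqslant |G|/2$ to convert the coloring problem into a matching problem on the complement. Write $n=|G|$, so that $n/2 \leqslant \Delta \leqslant n-1$. In any equitable $\Delta$-coloring every color class then has size $\lfloor n/\Delta\rfloor$ or $\lceil n/\Delta\rceil$, i.e.\ size $1$ or $2$; precisely $n-\Delta$ classes have size $2$ and $2\Delta-n$ classes have size $1$. A class of size $2$ is a pair of non-adjacent vertices, that is, an edge of the complement $\overline{G}$, and distinct classes are disjoint. Hence $G$ is equitably $\Delta$-colorable if and only if $\overline{G}$ contains a matching of size $n-\Delta$ (the unmatched vertices forming the singleton classes). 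So the whole theorem reduces to showing $\nu(\overline{G}) \geqslant n-\Delta$, where $\nu$ denotes the matching number.

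First I would invoke the Tutte--Berge formula, by which $\nu(\overline{G}) \geqslant n-\Delta$ is equivalent to the deficiency bound $o(\overline{G}-S) - |S| \leqslant 2\Delta - n$ for every $S \subseteq V(G)$, where $o(H)$ denotes the number of odd components of $H$. Suppose for contradiction that some $S$ violates this; writing $s=|S|$ and $c=o(\overline{G}-S)$ and using that $c-s$ and $2\Delta-n$ have the same parity, we may assume $c-s \geqslant 2\Delta-n+2$. If $s \geqslant n-\Delta$, the trivial bound $c \leqslant n-s$ already gives $c-s \leqslant n-2s \leqslant 2\Delta-n$, a contradiction, so $s \leqslant n-\Delta-1$. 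The key structural observation is that for any vertex $v$ in a component $C$ of $\overline{G}-S$, every vertex of $(V\setminus S)\setminus C$ is a $G$-neighbor of $v$, whence $\Delta \geqslant \deg_G(v) \geqslant n-s-|C|$; thus every component of $\overline{G}-S$ has at least $\lambda := n-\Delta-s \geqslant 1$ vertices.

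Counting then gives $c\lambda \leqslant n-s$, and combining with $c \geqslant s+2\Delta-n+2 = \Delta-\lambda+2$ and simplifying yields
\[
(\lambda-1)(\Delta-\lambda) \leqslant 0 .
\]
Since $\lambda \geqslant 1$ and $\Delta-\lambda = 2\Delta-n+s \geqslant 0$, the left side is also nonnegative, so equality holds and $\lambda \in \{1,\Delta\}$. The main obstacle, and the delicate heart of the argument, is to show that each equality case is exactly one of the forbidden graphs. I expect $\lambda=\Delta$ to force $s=0$, $n=2\Delta$, and $\overline{G}$ to consist of two odd components of size $\Delta$; chasing the degree equalities then makes each component independent in $G$, so $G=K_{\Delta,\Delta}$ with $\Delta$ odd, i.e.\ $G=K_{2m+1,2m+1}$. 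Similarly $\lambda=1$ should force $V\setminus S$ to be a clique $K_{\Delta+1}$ in $G$, every vertex of which already attains degree $\Delta$ inside it; hence there are no $G$-edges to $S$, and connectedness of $G$ compels $S=\emptyset$, giving $G=K_{\Delta+1}=K_n$. Both outcomes are excluded by hypothesis, completing the contradiction. The one place demanding real care is tracking the equality conditions in the counting step -- namely that there are no even components and that every odd component has size exactly $\lambda$ -- so that the extremal configuration is pinned down precisely rather than merely bounded.
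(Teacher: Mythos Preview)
The paper does not actually prove Theorem~\ref{halfdelta}; it is quoted as a known result of Chen, Lih and Wu \cite{clw} and serves only as background for the new range $(|G|+1)/3 \leqslant \Delta < |G|/2$ treated in Theorem~\ref{main}. So there is no ``paper's own proof'' to compare against here.

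Your argument is correct and is in fact the natural one for this regime: when $\Delta \geqslant |G|/2$, an equitable $\Delta$-coloring is exactly a matching of size $n-\Delta$ in $\overline{G}$, and Tutte--Berge handles the rest. The counting you set up is right; with $\lambda=n-\Delta-s$ one gets $c\lambda \leqslant n-s=\Delta+\lambda$ and $c \geqslant \Delta-\lambda+2$, whence $(\lambda-1)(\Delta-\lambda)\leqslant 0$, and nonnegativity of both factors forces equality throughout. The equality analysis you flag as ``demanding real care'' does go through: equality in $c\lambda \leqslant n-s$ forces every component of $\overline{G}-S$ to be odd of size exactly $\lambda$, and then the two cases $\lambda=1$ and $\lambda=\Delta$ collapse exactly to $K_n$ and $K_{2m+1,2m+1}$ as you outline (in the first case the $K_{\Delta+1}$ already saturates all degrees, so connectedness kills $S$; in the second, $s=n-2\Delta$ combined with $2\Delta\geqslant n$ gives $s=0$, and the degree bound makes each part independent in $G$). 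You should replace the tentative ``I expect'' / ``should force'' phrasing with the actual two-line verifications, but there is no missing idea.
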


In the present paper, we are going to establish the following.

\begin{theorem}\label{main}
If a graph $G$ of order at least 6 satisfies $(|G|+1)/3 \leqslant \Delta 
< |G|/2$ and none of its components is a $K_{\Delta +1}$, then $G$ is 
equitably $\Delta$-colorable.
\end{theorem}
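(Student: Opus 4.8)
The plan is to translate equitable $\Delta$-colorability into a concrete packing problem and then reach it from an arbitrary proper coloring by rebalancing moves. First note the numerology forced by the hypotheses: since $2<|G|/\Delta<3$, an equitable $\Delta$-coloring is exactly a partition of $V(G)$ into $a=|G|-2\Delta$ independent triples and $b=3\Delta-|G|$ independent pairs, and the bounds $(|G|+1)/3\leqslant\Delta<|G|/2$ guarantee $a\geqslant 1$ and $b\geqslant 1$. A proper $\Delta$-coloring to start from is available: applying Brooks' theorem componentwise, every component $C$ has $\chi(C)\leqslant\Delta(C)\leqslant\Delta$ unless $C$ is complete or an odd cycle; a complete component $K_{m+1}$ with $m<\Delta$ needs only $m+1\leqslant\Delta$ colors, a component $K_{\Delta+1}$ is excluded by hypothesis, and an odd cycle needs $3\leqslant\Delta$ colors because $|G|\geqslant 6$ forces $\Delta\geqslant 3$. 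Hence $G$ is $\Delta$-colorable.

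Next I would rebalance. Among all proper colorings of $G$ with the $\Delta$ colors (empty classes allowed), choose one minimizing $\Phi=\sum_i|V_i|^2$, and aim to show such a minimizer is automatically equitable, i.e.\ has all classes of size $2$ or $3$. The basic move is a single-vertex transfer: moving $v$ from a class of size $p$ to a class of size $q\leqslant p-2$ in which $v$ has no neighbour strictly lowers $\Phi$. When no single transfer is available one chains transfers along an auxiliary digraph $D$ on the colour classes, with an arc $A\to B$ whenever some vertex of $A$ has no neighbour in $B$; an augmenting directed path from an oversized class to an undersized one then lets one shift a vertex across each arc and decrease $\Phi$. This is the Hajnal--Szemer\'{e}di-style transfer machinery specialised to our tiny class sizes.

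The main obstacle is ruling out \emph{stuck} minimizers, where no augmenting path reaches an undersized class. If the coloring is inequitable, let $A$ be a class of maximum size, so $|A|\geqslant 3$ and some class has size at most $|A|-2$, and let $\mathcal R$ be the set of classes reachable from $A$ in $D$. Then every class in $\mathcal R$ has size at least $|A|-1\geqslant 2$, while no arc leaves $\mathcal R$; the latter means every vertex of $R=\bigcup_{C\in\mathcal R}C$ has a neighbour in each of the $\Delta-|\mathcal R|$ classes outside $\mathcal R$. Double counting the edges between $R$ and its complement---bounding them below by $|R|(\Delta-|\mathcal R|)$ and above by $\Delta$ times the number of outside vertices---should contradict $\Delta<|G|/2$ once the size information $|R|\geqslant 2|\mathcal R|$ and $|G|<3\Delta$ is inserted. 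I expect this counting to close the generic case but to leave a handful of extremal configurations, namely $|G|$ small (near $6$) or $\Delta$ just below $|G|/2$, together with the parity complications when $|G|$ is odd, which I would dispose of by directly constructing the required triples and pairs.

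Finally, it is worth recording a dual viewpoint that both sanity-checks the target and connects the result to Theorem~\ref{halfdelta}. The partition sought is precisely a spanning subgraph of the complement $\overline G$ consisting of $a$ triangles and $b$ edges, and $\overline G$ has minimum degree $|G|-1-\Delta>|G|/2-1$; this Dirac-type density is exactly what makes the transfer argument succeed and is the complement-side manifestation of the hypothesis $\Delta<|G|/2$. Together with Theorem~\ref{halfdelta}, which settles $\Delta\geqslant|G|/2$, the present theorem pushes equitable $\Delta$-colorability down to all $\Delta\geqslant(|G|+1)/3$ for graphs with no $K_{\Delta+1}$ component.
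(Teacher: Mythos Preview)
Your outline diverges from the paper's argument, and the point where you wave your hands is exactly where the real content lies.

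The double-counting step does not close. With your notation, you want
\[
|R|\,(\Delta-|\mathcal R|)\ \leqslant\ \Delta\,(|G|-|R|)
\]
together with $|R|\geqslant 2|\mathcal R|$ and $|G|\leqslant 3\Delta-1$ to yield a contradiction. It does not: take $|\mathcal R|=1$ and $|R|=3$ (a single oversized triple $A$, each of whose vertices has a neighbour in every other colour class). The inequality becomes $3(\Delta-1)\leqslant \Delta(|G|-3)$, which simplifies to $|G|\geqslant 6-3/\Delta$, i.e.\ merely $|G|\geqslant 6$. More generally, setting $|R|=2|\mathcal R|=2k$ and $|G|=3\Delta-1$ gives $4k\Delta-2k^{2}\leqslant 3\Delta^{2}-\Delta$, which holds for every $1\leqslant k\leqslant \Delta-1$. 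So the bound never bites, not just at the boundary: the ``extremal configurations'' you defer are the generic case, not a handful of small exceptions. This is unsurprising, since what you are attempting---the Hajnal--Szemer\'{e}di transfer machinery with $\Delta$ colours rather than $\Delta+1$---is essentially Conjecture~\ref{EDCC} itself; the hypothesis $\Delta\geqslant(|G|+1)/3$ must be exploited much more delicately than through a single edge count between $R$ and its complement.

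For comparison, the paper does not start from a $\Delta$-colouring and rebalance. It works from the other end: take an $[r,s,t]$-colouring that is \emph{maximal} (greatest possible number $r$ of independent triples, then greatest $s$), and prove the structural Theorem~\ref{key3} that any such colouring already has $r+s+t\leqslant\Delta$. The maximality hypothesis gives a long list of adjacency constraints (Lemma~\ref{key1}); the proof of Theorem~\ref{key3} is then a detailed case analysis on $t\in\{0,1,\geqslant 2\}$ in which those constraints force either too many edges on a small set of vertices or an explicit $K_{\Delta+1}$ component. Once $r+s+t\leqslant\Delta$ is known, two short lemmas (split triples to reach exactly $\Delta$ classes, then merge singletons into pairs) finish. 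The hypothesis that no component is $K_{\Delta+1}$ is used not only to obtain a $\Delta$-colouring via Brooks, but again inside the case analysis to rule out the stuck configurations---precisely the structural use your counting does not capture.
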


This implies that Conjecture  \ref{EDCC}  holds for a connected graph 
$G$ satisfying $(|G|+1)/3 \leqslant \Delta < |G|/2$. We note that 
Conjecture \ref{EDCC} has also been  established for any connected 
graph $G$ satisfying $\Delta \leqslant 3$ in \cite{clw}. Kierstead and 
Kostochka \cite{kk11} extended it to the case $\Delta =4$. Conjectures 
\ref{ECC} and \ref{EDCC} have been studied intensively with respect to 
graph classes such as forests, split graphs, outerplanar graphs,
series-parallel graphs, planar graphs, graphs with low degeneracies, 
graphs with bounded treewidth, Kneser graphs, interval graphs, etc. 
The reader is referred to \cite{lih_new} for more information.

%
\section{Main results}
%

For subsets $X$ and $Y$ of vertices of a graph $G$, let $\|X,Y\|$ 
denote the number of edges with one endpoint in $X$ and the other 
endpoint in $Y$. Clearly, $\|X,Y\|=\|Y,X\|$. We often abbreviate 
the singleton $\{x\}$ to $x$ when the context is clear. We write 
$u \sim v$ to denote that vertices $u$ and $v$ are adjacent. For 
a vertex $v\in V(G)$, we define the ({\em open}) {\em neighborhood} 
$N(v)$ to be the set $\{u \in V(G) \mid u \sim v\}$. The set $N[v]= 
N(v)\cup \{v\}$ is called the {\em closed neighborhood} of $v$. An 
$m$-independent set is an independent set of $m$ vertices. The {\em 
independence number} $\alpha(G)$ of $G$ is the maximum integer $m$ 
such that $G$ has an $m$-independent set. An $m$-{\em matching} is 
a set of $m$ mutually non-incident edges. A {\em component} of a 
graph $G$ is a maximal connected subgraph of $G$. The subgraph 
induced by a subset $S \subseteq V(G)$ is denoted by $G[S]$. The 
disjoint union of $m$ copies of a graph $G$ is denoted by $mG$. 

We call a coloring of $G$ an $[r,s,t]$-{\em coloring} if it is an 
$(r+s+t)$-coloring of $G$ having $r$ color classes of size three, 
$s$ color classes of size two and $t$ singleton color classes. The 
set of all possible $[r,s,t]$-colorings of $G$ is nonempty since 
there exists the trivial $[0,0,|G|]$-coloring.

\begin{lemma}\label{key5}
Let $G$ be a graph with $\Delta < |G|/2$. Suppose that an 
$[r,s,t]$-coloring of $G$ satisfies $r+s+t\leqslant \Delta$. 
Then, for any integer $m$, $r+s+t \leqslant m\leqslant \Delta$, 
there exists an $[a,b,c]$-coloring of $G$ satisfying $a+b+c=m$.   
\end{lemma}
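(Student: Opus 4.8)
The plan is to prove the statement by induction on $m$, climbing one color at a time from $r+s+t$ up to $\Delta$. The base case $m = r+s+t$ holds trivially, since the given $[r,s,t]$-coloring is itself an $[a,b,c]$-coloring with $a+b+c = m$. For the inductive step, I would assume an $[a,b,c]$-coloring with $a+b+c = m$ and $m < \Delta$ has been produced, and from it construct an $[a',b',c']$-coloring with $a'+b'+c' = m+1 \leqslant \Delta$. Thus the whole lemma reduces to a single splitting operation that raises the number of color classes by exactly one.

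The key point is that the size constraints always leave a color class available to split. Since $3a + 2b + c = |G|$ and $a + b + c = m$, subtracting gives $2a + b = |G| - m$. Because $m < \Delta < |G|/2 < |G|$, the right-hand side is strictly positive, so $2a + b > 0$; hence at least one color class has size two or three. To pass from $m$ to $m+1$ colors I would then split one such class. If some class has size three (that is, $a \geqslant 1$), I partition it into a class of size two and a singleton, replacing $(a,b,c)$ by $(a-1,\, b+1,\, c+1)$. Otherwise $a = 0$ forces $b \geqslant 1$, and I partition a size-two class into two singletons, replacing $(a,b,c)$ by $(a,\, b-1,\, c+2)$. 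In either case the total number of vertices is preserved, the number of colors increases by exactly one, and the coloring stays proper because any subset of an independent set is independent.

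The argument is elementary and I do not expect a genuine obstacle: the only thing that really must be checked is that a class of size at least two is always present to be split, and this is precisely what the hypothesis $\Delta < |G|/2$ delivers through the inequality $2a + b = |G| - m > 0$. Iterating the splitting step until the color count reaches $\Delta$ produces a valid $[a,b,c]$-coloring for every integer $m$ in the range $r+s+t \leqslant m \leqslant \Delta$, as required.
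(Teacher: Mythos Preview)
Your argument is correct and is essentially the same splitting idea as the paper's, but packaged differently. The paper does the whole thing in one shot: setting $q=m-(r+s+t)$, it observes that $2\Delta<|G|=3r+2s+t\leqslant 4r+2s+2t$ forces $q\leqslant\Delta-(r+s+t)<r$, so there are always at least $q$ size-three classes available; splitting $q$ of them simultaneously yields an $[r-q,\,s+q,\,t+q]$-coloring with exactly $m$ classes. Your inductive version reproduces this when you always take the ``$a\geqslant 1$'' branch, and in fact the inequality $q<r$ shows your ``$a=0$'' branch is never reached if one starts from the given $[r,s,t]$-coloring. So your second case is harmless but vacuous here, and the paper's one-line bound $q<r$ is what replaces your inductive bookkeeping.
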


\begin{proof}
Let an $[r,s,t]$-coloring of $G$ satisfying $r+s+t \leqslant \Delta$ 
have color classes $X_i=\{x_i,y_i,z_i\}$, $U_j=\{u_j,v_j\}$ and $\{w_k\}$, 
where $1 \leqslant i\leqslant r$, $1\leqslant j\leqslant s$ and $1\leqslant 
k\leqslant t$. Let $q=m-(r+s+t)$. Then $q \leqslant \Delta -(r+s+t) < r$ 
since $2\Delta < |G|=3r+2s+t \leqslant 4r+2s+2t$. We partition $X_i$ into 
$\{x_i,y_i\}$ and $\{z_i\}$ for $1 \leqslant i \leqslant q$ to obtain an 
$[r-q,s+q,t+q]$-coloring of $G$ satisfying $(r-q)+(s+q)+(t+q)=m$. 
\end{proof}

\begin{lemma}\label{key6}
Let $G$ be a graph with $\Delta < |G|/2$. Suppose that an $[r,s,t]$-coloring 
of $G$ satisfies $r+s+t\leqslant \Delta$. Then $G$ is equitably $(r+s+t)$-colorable.
\end{lemma}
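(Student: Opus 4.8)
The plan is to keep the number of colours fixed at $k=r+s+t$ and to convert the given $[r,s,t]$-coloring into one whose classes all have size $2$ or $3$. First I would extract the numerical consequence of the hypotheses: combining $r+s+t\leqslant\Delta$ with $\Delta<|G|/2=(3r+2s+t)/2$ yields $t<r$, and hence $2<|G|/k<3$ whenever $t\geqslant 1$ (the case $t=0$ gives classes of sizes $3$ and $2$ only, which is already equitable). Therefore every equitable $k$-coloring of $G$ must consist of exactly $|G|-2k=r-t$ classes of size $3$ and $k-(r-t)=s+2t$ classes of size $2$. So it suffices to produce an $[r-t,s+2t,0]$-coloring of $G$, that is, a proper $k$-coloring with no singleton class.

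To produce such a coloring I would argue extremally: among all proper $k$-colorings of $G$ whose classes have sizes in $\{1,2,3\}$ --- equivalently, among all $[a,b,c]$-colorings with $a+b+c=k$, for which necessarily $a-c=|G|-2k=r-t$ --- I would choose one minimising the number $c$ of singletons and try to show $c=0$. The basic reduction keeps $k$ fixed and removes one singleton: if a singleton $\{w\}$ and a size-three class $\{x,y,z\}$ satisfy $w\not\sim z$, then replacing them by $\{x,y\}$ and $\{z,w\}$ sends the type $[a,b,c]$ to $[a-1,b+2,c-1]$. Thus if some singleton has a non-neighbour in some size-three class, then $c$ can be decreased, contradicting minimality; so in the extremal coloring every singleton must be adjacent to all $3a$ vertices lying in size-three classes.

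It remains to contradict $c\geqslant 1$ in this \emph{saturated} situation, and this is where I expect the genuine difficulty. The degree bound supplies the leverage: a singleton $\{w\}$ adjacent to all size-three vertices has $3a\leqslant\deg(w)<(3a+2b+c)/2$, so $3a<2b+c$, and together with $c<a$ this forces $b\geqslant 1$ and, by counting, a size-two class $\{u,v\}$ with $u\not\sim w$ and $v\not\sim w$. One would then remove the singleton by a more global recolouring of a bounded block of vertices --- re-partitioning $\{w\}$, a size-two class, and a suitably chosen size-three class into classes without a singleton, or redistributing the saturated singletons among size-two classes and low-degree vertices. The crux, and the step I expect to be the main obstacle, is guaranteeing that such a recolouring always exists: a naive count need not supply a \emph{compatible} size-three class, since a saturated singleton together with a size-two class and a size-three class can reduce to two mutually complete independent triples, which admits no re-partition into pairs. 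The argument must therefore exploit the global structure forced by $\Delta<|G|/2$ --- that saturation of the size-three classes leaves a large reservoir of non-neighbours elsewhere --- to relocate vertices and drive $c$ to $0$, rather than relying on a single local move.
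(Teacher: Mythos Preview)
Your framework is right and matches the paper: show $r>t$, aim for an $[r-t,s+2t,0]$-coloring, and argue by reducing the number $c$ of singletons one step at a time (your extremal formulation is equivalent to the paper's iterative one). The gap is exactly where you flag it: you do not resolve the saturated case. You find a size-two class $U_j=\{u,v\}$ with $\|w,U_j\|=0$, and that claim is correct, but it is not enough. Forming the triple $\{w,u,v\}$ drops the number of classes; restoring it by splitting some triple reintroduces a singleton and returns you to type $[a,b,c]$. Re-partitioning $\{w\}\cup U_j\cup X_i$ into three pairs requires a non-edge between $U_j$ and $X_i$, and nothing in your degree count on $w$ alone forces such a non-edge for any $i$.

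The missing idea is to use saturation \emph{symmetrically}. If every singleton is adjacent to every vertex of every size-three class, then in particular any fixed size-three vertex $x_1\in X_1$ is adjacent to all $c$ singletons. Now bound $\deg(w)+\deg(x_1)$ instead of $\deg(w)$: the edges from $w$ to the $3a$ size-three vertices and from $x_1$ to the $c$ singletons already contribute $3a+c=|G|-2b$, so
\[
\sum_{j}\|\{w,x_1\},U_j\| \;\leqslant\; \deg(w)+\deg(x_1)-(3a+c) \;<\; |G|-(|G|-2b)\;=\;2b,
\]
and hence some $U_j$ satisfies $\|\{w,x_1\},U_j\|\leqslant 1$. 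Then $G[\{w,x_1\}\cup U_j]$ is $K_2\cup 2K_1$ or $P_3\cup K_1$, so $\{w,x_1,u_j,v_j\}$ splits into two independent pairs $A,B$, and $\{w\}\cup X_1\cup U_j$ partitions into $A$, $B$, $\{y_1,z_1\}$: three pairs, giving type $[a-1,b+2,c-1]$. (The paper also first disposes of the case of two non-adjacent singletons, which in your extremal language is another $c$-decreasing move you should record; it slightly sharpens the count but, as the inequality above shows, is not strictly needed.) This single two-vertex degree count replaces the ``global recolouring'' you anticipated and completes the argument.
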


\begin{proof}
Let an $[r,s,t]$-coloring of $G$ satisfying $r+s+t \leqslant \Delta$ 
have color classes $X_i=\{x_i,y_i,z_i\}$, $U_j=\{u_j,v_j\}$ and $\{w_k\}$, 
where $1 \leqslant i\leqslant r$, $1\leqslant j\leqslant s$ and $1\leqslant 
k\leqslant t$. We have $r>t$ since $r-t=(3r+2s+t)-2(r+s+t) \geqslant |G|-2
\Delta>0$. If $t=0$, we are done. Otherwise, we initiate a reduction process 
to construct an $[r-1,s+2,t-1]$-coloring of $G$. This process can be repeated 
until we obtain an $[r-t,s+2t,0]$-coloring of $G$ that is also an equitable 
$(r+s+t)$-coloring of $G$.

The reduction process is described as follows. If $w_k \not \sim w_{k'}$ 
for some $k$ and $k'$, then $X_1\cup \{w_k, w_{k'}\}$ can be partitioned 
into independent sets $\{x_1,y_1\}$, $\{z_1\}$ and $\{w_k,w_{k'}\}$. Hence, 
$G$ has an $[r-1,s+2,t-1]$-coloring. Suppose $w_k \sim w_{k'}$ for any 
distinct $k$ and $k'$. If $z_i \not \sim w_k$ for some $i$ and $k$, then 
$X_i\cup w_k$ can be partitioned into independent sets $\{x_i,y_i\}$ and 
$\{z_i,w_k\}$. Hence, $G$ has an $[r-1,s+2,t-1]$-coloring. Now suppose 
$\|X_i,w_k\|= 3$ for all $i$ and $k$. If $\|\{x_1,w_1\},U_j\| \geqslant 2$ 
for all $j$, then $2\Delta \geqslant \deg(x_1)+\deg(w_1) \geqslant 
\sum_{k=1}^{t}\|x_1,w_k\|+\sum_{i=1}^{r}\|w_1,X_i\|+\sum_{k=1}^{t}\|w_1,w_k\|
+\sum_{j=1}^{s}\|\{x_1,w_1\},U_j\| \geqslant t+3r+t-1+2s\geqslant |G| > 2\Delta$, 
a contradiction. Hence, $\|\{x_1,w_1\},U_j\| \leqslant 1$ for some $j$. Since 
$G[\{x_1,w_1\}\cup U_j]$ is equal to $P_3\cup K_1$ or $K_2\cup 2K_1$, there exist 
two disjoint 2-independent sets $A$ and $B$ in $G[\{x_1,w_1\}\cup U_j]$. Thus 
$w_1\cup X_1\cup  U_j$ can be partitioned into disjoint 2-independent sets $A$, 
$B$ and $\{y_1,z_1\}$. Hence, $G$ has an $[r-1,s+2,t-1]$-coloring.
\end{proof}

\bigskip

A coloring of $G$ is called {\em maximal} if it is an $[r,s,t]$-coloring 
of $G$ for some $r, s$ and $t$ such that for any other $[r',s',t']$-coloring, 
we have (i) $r>r'$, or (ii) $s\geqslant s'$ when $r=r'$. The existence of 
a maximal $[r,s,t]$-coloring of $G$ implies that $G$ cannot have more than 
$r$ mutually disjoint 3-independent sets.

\begin{theorem} \label{key3}
If a graph $G$ of order at least 6 satisfies $(|G|+1)/3 \leqslant \Delta 
< |G|/2$ and none of its components is a $K_{\Delta +1}$, then any maximal 
$[r,s,t]$-coloring of $G$ satisfies $r+s+t\leqslant \Delta$.
\end{theorem}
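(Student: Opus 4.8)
The plan is to argue by contradiction. Suppose $G$ has a maximal $[r,s,t]$-coloring with $r+s+t\geqslant\Delta+1$, and I will produce either a coloring that beats it in the lexicographic order $(r,s)$ or a $K_{\Delta+1}$ component, both of which are forbidden. Throughout write $n:=|G|=3r+2s+t$ and let $H=G[U_1\cup\cdots\cup U_s\cup\{w_1,\dots,w_t\}]$ be the subgraph induced by the non-triple classes, so $|H|=2s+t$. The hypotheses give $2\Delta<n$ and, since $(n+1)/3\leqslant\Delta$, also $n\leqslant 3\Delta-1$.

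First I would record two consequences of maximality. (i) The singletons $W:=\{w_1,\dots,w_t\}$ are pairwise adjacent, for a non-adjacent pair $w_k\not\sim w_{k'}$ could be merged into one size-two class, giving an $[r,s+1,t-2]$-coloring that beats the given one. Thus $W$ is a clique, so $t\leqslant\Delta+1$; and if $t=\Delta+1$ then each $w_k$ already has $t-1=\Delta$ neighbours inside $W$, hence exactly $\Delta$ neighbours, all in $W$, so $W$ is a component equal to $K_{\Delta+1}$, contradicting the hypothesis. Therefore $t\leqslant\Delta$. (ii) $\alpha(H)\leqslant 2$: an independent triple $\{a,b,c\}\subseteq H$ could be promoted to a new size-three class, and each class that loses a vertex merely shrinks (a size-two class becomes a singleton, a singleton disappears), producing an $[r+1,\ast,\ast]$-coloring, which again beats the given one.

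Next I would extract degree information from $\alpha(H)\leqslant 2$. For each size-two class the pair $\{u_j,v_j\}$ is non-adjacent, so every other vertex of $H$ is adjacent to $u_j$ or to $v_j$; that is, each size-two class dominates $H$. Summing degrees over such a pair gives $2s+t-2\leqslant\deg(u_j)+\deg(v_j)\leqslant 2\Delta$, whence $2s+t\leqslant 2\Delta+2$. Applying domination to a fixed singleton $w_k$, it has a neighbour in every size-two class and is adjacent to the other $t-1$ singletons, so $\deg(w_k)\geqslant s+(t-1)$ and therefore $s+t\leqslant\Delta+1$ when $t\geqslant 1$; the case $t=0$ would be handled separately by the analogous domination among size-two classes.

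The remaining and decisive step is to bound $r$, and this is where I expect the real difficulty. The inequalities above, together with $n\leqslant 3\Delta-1$, are still consistent with $r+s+t=\Delta+1$, so they cannot finish the proof on their own: one must rule out the triple classes being ``wasted.'' The obstruction is that a single new independent triple using a vertex of some $X_i$ only demotes that class from size three to size two and leaves $r$ unchanged, so no one-class move helps; genuine progress requires rearranging a singleton (or a dominating pair $\{u_j,v_j\}$) simultaneously with one or two triple classes. For instance, if a singleton $w_k$ is non-adjacent to all of $X_i=\{x,y,z\}$ then $\{x,y,z,w_k\}$ is $4$-independent, and splitting it as a pair $\{x,w_k\}$ against a triple $\{y,z,\cdot\}$ completed by a second singleton yields an $[r,s+1,t-2]$-coloring, a contradiction; variants of this move, choosing which vertices of the triple classes absorb the freed small-class vertices and using the domination of $H$ to locate the required non-adjacencies, should force each singleton to meet almost every triple class and thereby upgrade $\deg(w_k)\geqslant r+s+(t-1)$, giving $r+s+t\leqslant\Delta+1$. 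Organizing these rearrangements so that they cover every configuration, together with the final push from $\Delta+1$ down to $\Delta$ and the degenerate cases (small $t$, or few triple classes) where a given move stalls, is the delicate endgame; it is this case analysis, rather than any single inequality, that I expect to be the main obstacle.
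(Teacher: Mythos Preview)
Your proposal correctly identifies the overall architecture---argue by contradiction, exploit maximality to make the singletons a clique and to force $\alpha(H)\leqslant 2$ on the non-triple part, then count degrees---and that is indeed how the paper proceeds. But what you have written is a plan, not a proof: you explicitly defer the ``delicate endgame,'' and that endgame is in fact where essentially all of the content lies.

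There are two concrete gaps. First, your sketch for reaching $r+s+t\leqslant\Delta+1$ does not work as stated. You suggest that when $\|w_k,X_i\|=0$ one can form the triple $\{y,z,w_{k'}\}$ with a \emph{second} singleton $w_{k'}$, but nothing prevents $w_{k'}$ from being adjacent to $y$ or $z$; indeed the paper allows $\|w_k,X_i\|=0$ without any immediate $[r,s+1,t-2]$-coloring (cf.\ Lemma~\ref{key1}(6)). What the paper actually does is prove $\|\{w_k,w_{k'}\},X_i\|\geqslant 2$ for every pair of singletons and every triple class (Lemma~\ref{key1}(5)), then sum $\deg(w_1)+\deg(w_2)$ to get $r+s+t\leqslant\Delta+1$. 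This requires $t\geqslant 2$, which the paper secures by invoking the Hajnal--Szemer\'edi theorem to compare the maximal coloring against an equitable $(\Delta+1)$-coloring; that ingredient is entirely absent from your outline, and without it the cases $t\in\{0,1\}$ with $r+s+t\geqslant\Delta+2$ are not addressed. Second, and more seriously, ruling out $r+s+t=\Delta+1$ is the bulk of the argument. The paper splits on $t\geqslant 2$, $t=0$, and $t=1$, relies on a list of thirteen structural facts about maximal colorings (Lemma~\ref{key1}, most of which you have not derived), and in each case either exhibits a $K_{\Delta+1}$ component or produces more than $r$ disjoint $3$-independent sets through carefully chosen repartitions of two or three color classes at a time. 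Your final paragraph correctly anticipates that this case analysis is the obstacle, but anticipating it is not carrying it out; as it stands the proposal contains no argument for this step.
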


The proof of the above theorem will be deferred to the final section.

\bigskip

\noindent
{\bf Proof of Theorem \ref{main}.}
Choose any maximal $[r,s,t]$-coloring of $G$. It follows from Theorem 
\ref{key3} that $r+s+t\leqslant \Delta$. By Lemma \ref{key5}, there 
exists an $[a,b,c]$-coloring of $G$ satisfying $a+b+c= \Delta$. By 
Lemma \ref{key6}, $G$ is equitably $\Delta$-colorable. 
\mbox{}\hfill\rule{0.5em}{0.809em}

\bigskip

An examination of the proof of Theorem \ref{key3} shows that the 
following can also be derived. 

\begin{theorem}
Ift a graph $G$ of order at least 6 satisfies $(|G|+1)/3 \leqslant 
\Delta < |G|/2$ and none of its components is a $K_{\Delta +1}$, then 
\[
\eq^*(G)\leqslant \min \{r+s+t \mid \mbox{There exists an 
$[r,s,t]$-coloring of $G$}\}.
\]
\end{theorem}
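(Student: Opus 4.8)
The plan is to split the range of admissible colour numbers at $\Delta$ and treat the two pieces separately, using the machinery behind Theorem \ref{main} together with the Hajnal--Szemer\'{e}di bound. Set $\mu = \min\{r+s+t \mid G \mbox{ has an } [r,s,t]\mbox{-coloring}\}$; this minimum is attained because the family of $[r,s,t]$-colorings is nonempty, containing the trivial $[0,0,|G|]$-coloring. To prove $\eq^*(G) \leqslant \mu$ I must exhibit, for every $k \geqslant \mu$, an equitable $k$-coloring of $G$.

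First I would establish the bound $\mu \leqslant \Delta$. Since $r \leqslant |G|/3$ is bounded, a maximal $[r,s,t]$-coloring exists, and Theorem \ref{key3} guarantees that it satisfies $r+s+t \leqslant \Delta$. As $\mu$ is the minimum of $r+s+t$ over \emph{all} colorings, it cannot exceed the value realized by this particular maximal coloring, so $\mu \leqslant \Delta$.

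Next I would cover the range $\mu \leqslant k \leqslant \Delta$. Fix a coloring realizing the minimum, an $[r_0,s_0,t_0]$-coloring with $r_0+s_0+t_0 = \mu$. For each integer $k$ in this range the inequalities $r_0+s_0+t_0 = \mu \leqslant k \leqslant \Delta$ hold, so Lemma \ref{key5} applies and produces an $[a,b,c]$-coloring with $a+b+c = k$; since $k \leqslant \Delta$, Lemma \ref{key6} then shows that $G$ is equitably $k$-colorable.

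Finally, for $k \geqslant \Delta+1$ equitable $k$-colorability is immediate from the Hajnal--Szemer\'{e}di bound $\eq^*(G) \leqslant \Delta+1$ recorded in Theorem \ref{haj-sze}. Concatenating the two ranges yields equitable $k$-colorability for all $k \geqslant \mu$, that is, $\eq^*(G) \leqslant \mu$. I do not expect a genuine obstacle, as the argument merely reassembles established results; the one point deserving care is the reduction $\mu \leqslant \Delta$, because a minimizing coloring need not be maximal --- maximality maximizes $r$ and then $s$, whereas minimizing $r+s+t$ is the same as maximizing $2r+s$ --- yet the inequality survives precisely because $\mu$ is a minimum over a family that includes the maximal colorings governed by Theorem \ref{key3}.
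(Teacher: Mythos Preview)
Your proposal is correct and is precisely the argument the paper has in mind: the paper does not spell out a separate proof but says the result follows from the proof of Theorem~\ref{key3}, and your write-up makes that derivation explicit by combining Theorem~\ref{key3} (to obtain $\mu\leqslant\Delta$), Lemmas~\ref{key5} and~\ref{key6} (to cover $\mu\leqslant k\leqslant\Delta$), and Theorem~\ref{haj-sze} (for $k\geqslant\Delta+1$). Your closing remark that a minimizing coloring need not itself be maximal, yet the bound survives because the maximal colorings lie in the family over which $\mu$ is taken, is exactly the right observation.
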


%
\section{Proof of Theorem \ref{key3}}
%

\begin{lemma}\label{key1}
Let the color classes of a maximal $[r,s,t]$-coloring of $G$ be 
denoted by $X_i=\{x_i,y_i,z_i\}$, $U_j=\{u_j,v_j\}$ and $\{w_k\}$, 
where $1\leqslant i\leqslant r$, $1\leqslant j\leqslant s$ and 
$1\leqslant k\leqslant t$. Then the following statements hold.
\begin{enumerate}
\item
The vertices $w_1,w_2,\ldots ,w_t$ are mutually adjacent.
\item
For all $k$ and $j$, $\|w_k,U_j\|\geqslant 1$.
\item
If $\|w_k,U_j\|= 1$ with $w_k \sim u_j$ for some $j$ and $k$, then 
$w_{k'} \sim u_j$ for all $k'$.
\item
If $\|w_k,X_i\|= 1$ with  $w_k \sim x_i$ for some $i$ and $k$, then 
$w_{k'} \sim x_i$ for all $k'$.
\item
For all $i$, $k$ and $k'$ ($k \neq k'$), $\|\{w_k, w_{k'}\},X_i\|
\geqslant 2$. If $\|\{w_k, w_{k'}\},X_i\|=2$, then $\|w_k,X_i\|= 
\|w_{k'},X_i\|=1$.
\item
If $\|w_k,X_i\|=0$, then $\|X_i,U_j\| \geqslant 3$ and $\|w_k \cup 
X_i,U_j\|\geqslant 4$, for all $j$. Moreover, $\|w_k\cup X_i, \beta 
\| \geqslant 3$ for some $\beta$ in $U_j$.
\item
For all distinct $j$ and $j'$, there exists a 2-matching in 
$G[U_j\cup U_{j'}]$.
\item
If $\|U_j,U_{j'}\|=2$ for all distinct $j$ and $j'$, then 
$G[\cup_{h=1}^{s}U_h]=2K_s$.
\item
If $\|X_i,U_j\|=0$, then (i) \ $\|w_k,X_i\| \geqslant 2$ and $\|w_k, X_i 
\cup U_j\| \geqslant 4$ for all $k$; (ii) \  $\|\gamma,X_i\| \geqslant 2$ 
(implying $\|X_i,U_{j'}\| \geqslant 4$) and $\|\gamma,X_i\cup U_j\| 
\geqslant 4$ for all $j' \neq j$ and all $\gamma \in U_{j'}$.
\item
If $\|X_i,U_j\|=1$, then $\|X_i,U_{j'}\| \geqslant 3$ for all $j' \neq j$.
\item
For all $i$, $j$ and $j'$ ($j \neq j'$), $\|X_i,U_j\cup U_{j'}\| \geqslant 4$.
\item
If $\|X_i,U_j\|=\|X_i,U_{j'}\|=\|U_j,U_{j'}\|=2$, then $G[X_i\cup U_j\cup 
U_{j'}]=K_1\cup 2K_3$.
\item
If $\|w_k,U_j\|=\|w_k,U_{j'}\|=1$ and $\|U_j,U_{j'}\|=2$, then $G[w_k\cup U_j
\cup U_{j'}]=K_2\cup K_3$.
\end{enumerate}
\end{lemma}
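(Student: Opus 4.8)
The plan is to derive all thirteen assertions from one mechanism, the \emph{maximality} of the coloring. Maximality means that $r$ (the number of size-three classes) is as large as possible and, subject to that, $s$ (the number of size-two classes) is as large as possible. Hence no recoloring of $G$ may either (a) increase the number of size-three classes or (b) keep that number fixed while increasing the number of size-two classes. For each item I would assume its conclusion fails, restrict attention to the union of the one, two, or three color classes named there (a set of at most eight vertices), and repartition \emph{only} those vertices so as to realize (a) or (b); the non-adjacencies supplied for free by the color classes (vertices inside a common class are pairwise non-adjacent) are what make the new independent sets available. The resulting contradiction forces the claim.

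For the local items the template is routine. In (1), two non-adjacent singletons would combine into a size-two class, raising $s$, so the $w_k$ are pairwise adjacent. In (2), a singleton missing both vertices of some $U_j$ would join it into a size-three class, raising $r$. Items (3)--(5) use the same ``promote to size two'' swap: one re-pairs a vertex of $X_i$ or $U_j$ with a non-adjacent singleton, peeling off an extra size-two class and raising $s$. For (6), (7), (9) the target instead is a \emph{triple}: in (7), if no $2$-matching joined $U_j$ and $U_{j'}$ then by K\"onig's theorem one vertex would cover all cross edges, leaving an independent triple and raising $r$; in (6) and (9), a deficient edge-count between $X_i$ and $U_j$ (or $U_{j'}$) would let the six or eight relevant vertices split into two disjoint independent triples, again raising $r$. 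The ``moreover'' clause of (6) needs one extra line: since $\|w_k\cup X_i,U_j\|\ge 4$, if each of $u_j,v_j$ absorbed only two of these edges the six vertices $\{w_k\}\cup X_i\cup U_j$ would once more split into two independent triples, so one endpoint must absorb at least three. Item (11) then requires no new argument: it is merely the trichotomy $\|X_i,U_j\|\in\{0,1,\ge 2\}$ combined with (9) and (10).

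The genuinely global item is (8). Here I would first use the hypothesis $\|U_j,U_{j'}\|=2$ together with the $2$-matching from (7) to conclude that every pair $U_j,U_{j'}$ is joined by a \emph{perfect} matching, hence is either ``parallel'' ($u_ju_{j'},v_jv_{j'}$) or ``crossed'' ($u_jv_{j'},v_ju_{j'}$). Relabelling each $U_j$ so that its matching with $U_1$ is parallel, I would note that any surviving crossed pair $U_j,U_{j'}$ makes the triple $U_1,U_j,U_{j'}$ carry a single crossing, and a direct check shows this redistributes the six vertices into two disjoint independent triples, raising $r$ by two. Hence every pair is parallel and $G[\bigcup_h U_h]=2K_s$. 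Items (12) and (13) are the remaining rigidity statements. Maximality here forbids, respectively, two disjoint independent triples among the seven vertices $X_i\cup U_j\cup U_{j'}$ and any independent triple among the five vertices $\{w_k\}\cup U_j\cup U_{j'}$ (that is, $\alpha\le 2$ there); given the prescribed counts of $2$ between each pair of classes, a short finite case analysis shows that the only graphs avoiding such triples are $K_1\cup 2K_3$ and $K_2\cup K_3$.

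I expect the obstacle to lie in these structural steps rather than in the inequalities. The repartitions behind (1)--(7) and (9)--(11) are mechanical once one decides which vertex of $X_i$ is paired with which class, but (8) needs a genuinely global consistency argument---aligning all the pairwise matchings at once, with the crossed triangle as the sole obstruction---and (12)--(13) demand that one check \emph{no} admissible edge pattern other than the claimed one survives maximality. The delicate bookkeeping throughout is to track the vertex left over as a new singleton and to confirm that the induced graph on the remaining vertices really does contain the promised independent sets.
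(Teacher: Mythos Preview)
Your plan is correct and matches the paper's proof essentially line for line: every item is obtained by assuming failure and locally repartitioning the named classes to either raise $r$ (producing an extra independent triple) or keep $r$ fixed while raising $s$, exactly as you describe; the paper's treatment of (8), (12), and (13) is the same relabel-and-check-the-crossed-triple argument and short case analysis you outline. The only cosmetic differences are that the paper handles (7) by a direct four-line case split rather than invoking K\"onig, and you should spell out (10) explicitly (it is another ``two triples plus a leftover singleton'' repartition of $X_i\cup U_j\cup U_{j'}$) rather than only citing it inside (11).
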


\begin{proof}
{\bf 1.}\ Suppose that there were two non-adjacent $w_k$ and $w_{k'}$. 
Since $\{w_k,w_{k'}\}$ is an independent set disjoint from all $U_j$'s, 
there would be an $[r,s+1,t-2]$-coloring of $G$, a contradiction. 

{\bf 2.}\ Suppose that $\|w_k,U_j\|=0$ for some $k$ and $j$. Since 
$w_k\cup U_j$ is a 3-independent set disjoint from all $X_i$'s, 
$G$ would have more than $r$ 3-independent sets, a contradiction. 

{\bf 3.}\ Suppose that $w_{k'}$ were not adjacent to $u_j$. Since 
$\{w_k, w_{k'}\}\cup U_j$ can be partitioned into independent sets 
$\{w_k,v_j\}$ and $\{w_{k'},u_j\}$, there would be an 
$[r,s+1,t-2]$-coloring of $G$, a contradiction. 

{\bf 4.}\ Suppose that $w_{k'}$ were not adjacent to $x_i$. Since 
$\{w_k, w_{k'}\}\cup X_i$ can be partitioned into independent sets 
$\{w_k,y_i,z_i\}$ and $\{w_{k'},x_i\}$, there would be an 
$[r,s+1,t-2]$-coloring of $G$, a contradiction. 

{\bf 5.}\ Suppose that $\|\{w_k, w_{k'}\},X_i\| \leqslant 1$ for 
some $i$, $k$ and $k'$ ($k \neq k'$). Since $G[\{w_k, w_{k'}\}
\cup X_i]$ is either $P_2\cup 3K_1$ or $P_3 \cup 2K_1$, each of 
which can be partitioned into a 3-independent set and a 2-independent 
set, there would be an $[r,s+1,t-2]$-coloring of $G$, a contradiction. 
Therefore, $\|\{w_k, w_{k'}\},X_i\| \geqslant 2$ for all $i$, $k$ and 
$k'$ ($k \neq k'$). 

Now, suppose that $\|\{w_k, w_{k'}\},X_i\|=2$ for some $i$, $k$ and $k'$ 
($k \neq k'$). We may also suppose that $\|w_k,X_i\|\leqslant \|w_{k'},X_i\| 
\leqslant 2$. If $\|w_k,X_i\| \geqslant 1$, then $\|w_k,X_i\|=\|w_{k'},X_i\|
=1$ and we are done. Otherwise, $\|w_{k'},X_i\|=2$. There would be some vertex 
$\alpha \in X_i$ such that $\{w_{k'},\alpha \}$ is an independent set. Since 
$\{w_k, w_{k'}\}\cup X_i$ can be partitioned into independent sets $w_k\cup 
(X_i\setminus \{\alpha \})$ and $\{w_{k'},\alpha \}$, there would be an 
$[r,s+1,t-2]$-coloring of $G$, a contradiction. 

{\bf 6.}\ Let $\|w_k,X_i\|=0$ and $\alpha \in X_i$. Suppose that 
$\|\alpha,U_j\|=0$ for some $U_j$. Since $w_k\cup X_i\cup U_j$ can 
be partitioned into independent sets $w_k\cup (X_i \setminus \{\alpha\})$ 
and $\{\alpha\}\cup U_j$, $G$ would have more than $r$ 3-independent sets, 
a contradiction. Therefore, $\|\alpha,U_j\|\geqslant 1$ for any $\alpha 
\in X_i$ and any $U_j$, and hence $\|X_i,U_j\|\geqslant 3$. Since $\|w_k,U_j\| 
\geqslant 1$ by (2), $\|w_k\cup X_i,U_j\|=\|w_k,U_j\|+\|X_i,U_j\| \geqslant 4$.

Suppose that, for some $U_j$, $\|w_k\cup X_i,u_j\| \leqslant \|w_k\cup X_i,v_j\| 
\leqslant 2$. Since $4\leqslant \|w_k\cup X_i,U_j\|=\|w_k\cup X_i,u_j\|+\|w_k\cup 
X_i,v_j\| \leqslant 4$, we have $\|w_k,U_j\|=\|\alpha ,U_j\|=1$ for all $\alpha 
\in X_i$ by (2) and the preceding paragraph, and hence $\|w_k\cup X_i,u_j\|=\|w_k
\cup X_i,v_j\|= 2$. We may suppose that $u_j \sim w_k$, $u_j \sim x_i$, $v_j \sim 
y_i$ and $v_j \sim z_i$. Since $w_k\cup X_i\cup U_j$ can be partitioned into
independent sets $\{u_j,y_i,z_i\}$ and $\{v_j,w_k,x_i\}$, $G$ would have more 
than $r$ 3-independent sets, a contradiction.

{\bf 7.}\ Suppose that $\|u_j,U_{j'}\|=0$ in $G[U_j\cup U_{j'}]$. Since 
$u_j\cup U_{j'}$ is a 3-independent set disjoint from all $X_i$'s,     
$G$ would have more than $r$ 3-independent sets, a contradiction. Hence, 
$\|u_j,U_{j'}\| \geqslant 1$. Similarly, $\|v_j,U_{j'}\| \geqslant 1$, 
$\|u_{j'},U_j\| \geqslant 1$ and $\|v_{j'},U_j\| \geqslant 1$. Suppose that
$u_j \sim u_{j'}$. If $v_j \sim v_{j'}$, then $\{u_ju_{j'},v_jv_{j'}\}$ is 
a desired matching. Otherwise, $v_j \sim u_{j'}$. Since $\|v_{j'},U_j\| 
\geqslant 1$, $v_{j'} \sim u_j$ and then $\{u_jv_{j'},v_ju_{j'}\}$ is a 
desired matching. 

{\bf 8.}\ For $j\neq {j'}$, there is a 2-matching in $G[U_j\cup U_{j'}]$ by 
(7). Then the assumption $\|U_j,U_{j'}\|=2$ implies that $\|u_j,U_{j'}\|=
\|v_j,U_{j'}\|=1$ and $G[U_j\cup U_{j'}]=\{u_ju_{j'},v_jv_{j'}\}$ or 
$\{u_jv_{j'},v_ju_{j'}\}$. By renaming the vertices if necessary, we may 
suppose that $N[u_1]=\{u_1,\ldots ,u_s\}$ and $N[v_1]=\{v_1,\ldots ,v_s\}$ 
in $G[\cup_{t=1}^{s}U_t]$. For any distinct $j, j' > 1$, if $G[U_j\cup U_{j'}]
=\{u_jv_{j'},v_ju_{j'}\}$, then $G[U_1\cup U_j\cup U_{j'}]$ is a $C_6$ which
contains two disjoint 3-independent sets. Thus $G$ would have more than $r$ 
3-independent sets, a contradiction. Hence, $G[U_j\cup U_{j'}]=\{u_ju_{j'}, 
v_jv_{j'}\}$, and then $N[u_j]=N[u_1]$ and $N[v_j]=N[v_1]$ in $G[\cup_{h=1}^{s}U_h]$. 
Therefore, $G[\cup_{h=1}^{s}U_h]=2K_s$.

{\bf 9.}\ Assume $\|X_i,U_j\|=0$. If $\|w_k,X_i\| \leqslant 1$ for some $k$, 
then $w_k\cup (X_i\setminus \{\alpha \})$ is a 3-independent set for some 
$\alpha \in X_i$. Since $w_k\cup X_i\cup U_j$ can be partitioned into 
independent sets $w_k\cup (X_i\setminus \{\alpha \})$ and $\{\alpha \} 
\cup U_j$, $G$ would have more than $r$ 3-independent sets, a contradiction. 
Hence, $\|w_k,X_i\| \geqslant 2$ for all $k$. Similarly, $\|\gamma ,X_i\| 
\geqslant 2$ for any $j' \neq j$ and any $\gamma \in U_{j'}$, and then 
$\|X_i,U_{j'}\|=\|u_{j'},X_i\|+\|v_{j'},X_i\| \geqslant 2+2=4$.
 
Suppose that $\|w_k,X_i\cup U_j\| \leqslant 3$ for some $k$. Since 
$X_i \cup U_j$ contains exactly five vertices, there are two vertices 
$\alpha$ and $\beta$ in $X_i\cup U_j$ such that $\{w_k,\alpha,\beta\}$ 
is an independent set. Since $w_k\cup X_i\cup U_j$  can be partitioned 
into independent sets $\{w_k,\alpha,\beta\}$ and $(X_i\cup U_j)\setminus 
\{\alpha,\beta\}$, $G$ would have more than $r$ 3-independent sets, 
a contradiction. Therefore, $\|w_k,X_i\cup U_j\| \geqslant 4$ for all $k$. 
Similarly, $\|\gamma ,X_i\cup U_j\| \geqslant 4$ for any $j' \neq j$ and 
any $\gamma \in U_{j'}$. 

{\bf 10.}\ We may assume that the unique edge between $X_i$ and $U_j$ is 
$x_iu_j$. If $\|X_i,U_{j'}\| \leqslant 2$ for some $j' \neq j$, then there 
is some vertex $\alpha \in X_i$ such that $U_{j'}\cup \{\alpha \}$ is a 
3-independent set. Since $X_i \cup U_j \cup U_{j'}$ can be partitioned into  
independent sets $\{u_j\}$, $\{v_j\}\cup (X_i\setminus \{\alpha \})$ and 
$U_{j'}\cup \{\alpha \}$, $G$ would have more than $r$ 3-independent sets, 
a contradiction. Therefore, $\|X_i,U_{j'}\| \geqslant 3$ for all $j' \neq j$.

{\bf 11.}\ We may suppose that $\|X_i,U_j\| \leqslant \|X_i,U_{j'}\|$. 
If $\|X_i,U_j\|=0$, then $\|X_i,U_j\cup U_{j'}\|=\|X_i,U_{j'}\| \geqslant 
4$ by (9). If $\|X_i,U_j\|=1$, then  $\|X_i,U_j\cup U_{j'}\|=\|X_i,U_j\|+
\|X_i,U_{j'}\|\geqslant 1+3=4$ by (10). If $\|X_i,U_j\| \geqslant 2$, then 
$\|X_i,U_j\cup U_{j'}\|=\|X_i,U_j\|+\|X_i,U_{j'}\|\geqslant 2+2=4$. Therefore, 
$\|X_i,U_j\cup U_{j'}\| \geqslant 4$.

{\bf 12.}\  We may assume that $\|X_i,U_j\|=\|X_i,U_{j'}\|=\|U_j,U_{j'}\|=2$ 
with $x_i \sim u_j$, $u_j \sim u_{j'}$ and $v_j \sim v_{j'}$. If $u_j \sim y_i$ 
or $u_j \sim z_i$, then $\|v_j,X_i\|=0$. Since $\|X_i,U_{j'}\|=2$, there is some 
vertex $\alpha \in X_i$ such that $U_{j'}\cup \{\alpha \}$ is a 3-independent set. 
Since $X_i \cup U_j \cup U_{j'}$ can be partitioned into independent sets $\{u_j\}$, 
$U_{j'}\cup \{\alpha \}$ and $\{v_j\}\cup (X_i\setminus \{\alpha\})$, $G$ would 
have more than $r$ 3-independent sets, a contradiction. Hence, $\|u_j,X_i\|=1$.  

Now, suppose $v_j \sim x_i$. Since $\|X_i,U_{j'}\|=2$, there is some vertex 
$\beta \in X_i$ such that $U_{j'}\cup \{\beta \}$ is a 3-independent set. 
Let $\gamma$ denote one of $y_i$ and $z_i$ that is different from $\beta$. 
Since $\|y_i,U_j\|=\|z_i,U_j\|=0$, $U_j\cup \{\gamma\}$ is a 3-independent set. 
Since $X_i \cup U_j \cup U_{j'}$ can be partitioned into independent sets $X_i 
\setminus \{\beta,\gamma\}$, $U_{j'}\cup \{\beta \}$ and $U_j\cup \{\gamma \}$, 
$G$ would have more than $r$ 3-independent sets, a contradiction.  

Next, suppose that $v_j \sim y_i$. (The case that $v_j \sim z_i$ is similar.) 
If $u_{j'} \not \sim x_i$, since $X_i \cup U_j \cup U_{j'}$ can be partitioned 
into independent sets $\{v_{j'}\}$, $\{u_{j'},v_j,x_i\}$ and $\{u_j,y_i,z_i\}$, 
$G$ would have more than $r$ 3-independent sets, a contradiction. Hence, $u_{j'} 
\sim x_i$. If $v_{j'} \not \sim y_i$, since $X_i \cup U_j \cup U_{j'}$ can be 
partitioned into 3-independent sets $\{u_{j'}\}$, $\{u_j,v_{j'},y_i\}$ and 
$\{v_j,x_i,z_i\}$, $G$ would have more than $r$ 3-independent sets, a 
contradiction. Hence, $v_{j'} \sim y_i$. Therefore, $G[X_i\cup U_j\cup U_{j'}]$ 
consists of the singleton $z_i$ together with two 3-cycles $x_iu_ju_{j'}x_i$ and 
$y_iv_jv_{j'}y_i$.

{\bf 13.}\  We may assume that $\|w_k,U_j\|=\|w_k,U_{j'}\|=1$ and 
$\|U_j,U_{j'}\|=2$ with $w_k \sim u_j$, $u_j \sim u_{j'}$ and $v_j 
\sim v_{j'}$. Suppose that $u_{j'} \not \sim w_k$. Since $\{v_j,u_{j'},
w_k\}$ is an independent set disjoint from all $X_i$'s, $G$ would have 
more than $r$ 3-independent sets, a contradiction. Hence, $u_{j'} \sim 
w_k$, and then $G[w_k\cup U_j\cup U_{j'}]$ consists of an edge $v_jv_{j'}$ 
and a 3-cycle $w_ku_ju_{j'}w_k$.
\end{proof}

\bigskip

\noindent
{\bf Proof of Theorem \ref{key3}.}
We first note that $\Delta \geqslant 3$ when $G$ has at least 6 vertices 
and $(|G|+1)/3 \leqslant \Delta$. Then $\alpha(G) \geqslant {|G|/\chi(G)} 
\geqslant {|G|/\Delta} > 2$ by Brooks' Theorem under our assumptions. 
Hence, any maximal $[r,s,t]$-coloring of $G$ satisfies $r\geqslant 1$. 

In the first stage, we show that $r+s+t\leqslant \Delta +1$ for any maximal 
$[r,s,t]$-coloring of $G$. Suppose on the contrary that there exists a maximal 
$[r,s,t]$-coloring of $G$ with $r+s+t\geqslant \Delta +2$ such that the 
singleton color classes are $\{w_k\}$, $1\leqslant k\leqslant t$. By Theorem 
\ref{haj-sze}, there exists an equitable $(\Delta +1)$-coloring of $G$ having
$p \geqslant 0$ color classes of size $c+1$ and $q > 0$ color classes of size 
$c$. Hence $|G|=(c+1)p+cq$ and $p+q=\Delta+1$. If $c \geqslant 3$, then $|G| 
\geqslant 3\Delta+3+p > |G|$, a contradiction. If $c=2$, then $2\Delta +2+p=
|G|=3r+2s+t \geqslant 2\Delta +4+r-t$. It follow that $t\geqslant r-p+2\geqslant 
2$ by the maximality of the supposed $[r,s,t]$-coloring. If $c=1$, then $2\Delta 
+2 \geqslant \Delta +1+p=|G| \geqslant 2\Delta +4+r-t$. It follow that $t\geqslant 
r+2> 2$. Thus we always have $t \geqslant 2$. By (1), (2) and (5) of Lemma 
\ref{key1}, $2\Delta \geqslant \deg(w_1)+\deg(w_2) = \sum_{i=1}^{r} \| \{w_1,w_2\},
X_i\| + \sum_{i=1}^{s}(\|\{w_1,w_2\},U_i\|) + \sum_{i=1}^{t}(\|\{w_1,w_2\},w_i\|) 
\geqslant 2r+2s+2(t-1)=2(r+s+t-1)>2\Delta$, a contradiction. 

In the second stage, suppose that there exists a maximal $[r,s,t]$-coloring of 
$G$ with $r+s+t= \Delta +1$ such that $r \geqslant 1$ and the color classes 
are $X_i=\{x_i,y_i,z_i\}$, $U_j=\{u_j,v_j\}$ and $\{w_k\}$, where $1\leqslant 
i\leqslant r$, $1\leqslant j\leqslant s$ and $1\leqslant k\leqslant t$. Then 
we will derive contradictions for all of the following possible cases for $t$, 
and hence conclude that $r+s+t \leqslant \Delta$.

{\bf Case 1.} \ There is more than one singleton color class, i.e., $t\geqslant 2$.

Pick an arbitrary pair of distinct $k$ and $k'$. We have $2\Delta \geqslant 
\deg(w_k)+\deg(w_{k'}) = \sum_{i=1}^r\| \{w_k, w_{k'}\},X_i \| + \sum_{i=1}^s
\| \{w_k, w_{k'}\},U_i \| + \sum_{i=1}^t\| \{w_k, w_{k'}\},w_i \| \geqslant 
2r + 2s + 2(t-1)=2\Delta$ by (1), (2) and (5) of Lemma \ref{key1}. It follows 
that $\deg(w_k)=\deg(w_{k'})=\Delta$, $\|\{w_k, w_{k'}\},X_i\|=2$ and 
$\|w_k,U_j\|=1$ for all $i$, $j$, $k$ and $k'$ ($k \neq k')$. By (1) and (5) 
of Lemma \ref{key1}, we may suppose that $N[w_1]=\{x_1,\ldots ,x_r,u_1,\ldots ,
u_s,w_1, \ldots ,w_t\}$. By (3) and (4) of Lemma \ref{key1}, $N[w_k]=N[w_1]$ 
for any $k$. If $x_i \not \sim x_{i'}$ for some $i \neq i'$, then $G$ would 
have an $[r,s+1,t-2]$-coloring since $X_i\cup X_{i'}\cup \{w_1, w_2\}$ can be 
partitioned into independent sets $\{w_1,y_i,z_i\}$, $\{w_2,y_{i'},z_{i'}\}$ 
and $\{x_i,x_{i'}\}$. Hence, $x_i \sim x_{i'}$ for all $i$ and $i'$. 
Similarly, $x_i$, $u_j$ and $u_{j'}$ are mutually adjacent for all $i$, $j$ 
and $j'$ ($j \neq j'$). Then $\{x_1,\ldots ,x_r,u_1,\ldots ,u_s,w_1,\ldots ,
w_t\}$ forms a $K_{\Delta +1}$, a contradiction.

{\bf Case 2.} \ There is no singleton color class, i.e., $t=0$. 

Since $|G|=3r+2s=3\Delta +3-s$, we have $s=3\Delta +3-|G|\geqslant 4$. 

First suppose $\|X_i,U_j\| \geqslant 2$ for all $i$ and $j$. Then $2\Delta
\geqslant \deg(u_j)+\deg(v_j) = \sum_{i=1}^{r}\|X_i,U_j\| + \sum_{j'=1}^{s}%
\|U_j,U_{j'}\| \geqslant 2r+2(s-1)=2\Delta$ by (7) of Lemma \ref{key1}. 
Then $\deg(u_j)=\deg(v_j)=\Delta$, $\|X_i,U_j\|=2$  and $\|U_j,U_{j'}\|=2$  
for all $i$, $j$ and $j'$ ($j \neq j'$). By (12) of Lemma \ref{key1}, 
$\|X_i,u_j\|=\|X_i,v_j\|=1$ for all $i$ and $j$. We may suppose that $N[u_1]=
\{x_1,\ldots ,x_r,u_1,\ldots ,u_s\}$. By (8) of Lemma \ref{key1}, $\{u_1,u_2,
\ldots ,u_s\}$ forms a $K_s$. By (12) of Lemma \ref{key1}, $N[u_j]=N[u_1]$ and 
$x_i \not \sim v_{j}$ for all $i$ and $j$. If $x_i \not \sim x_{i'}$ for some 
distinct $i$ and $i'$, then $G$ would have more than $r$ 3-independent sets 
since $X_1\cup X_{i'}\cup U_1\cup U_2$ can be partitioned into independent 
sets $\{v_i\}$, $\{u_1,y_i,z_i\}$, $\{u_2,y_{i'},z_{i'}\}$ and $\{v_1,x_i,x_{i'}\}$. 
Hence, $x_i \sim x_{i'}$ for all distinct $i$ and $i'$. Then $\{x_1,\ldots ,x_r,u_1,
\ldots ,u_s\}$ forms a $K_{\Delta +1}$, a contradiction.

Next suppose $\|X_i,U_j\| \leqslant 1$ for some $i$ and $j$, say $\|X_1,U_1\| 
\leqslant 1$. Let ${\cal M}=\{X_i\mid \|X_i,U_j\| \leqslant 1 \mbox{ for some } 
j=1,2,3\}$ and $|{\cal M}|=m\geqslant 1$. If $X_i\in {\cal M}$, then $\|X_i, U_1
\cup U_2\cup U_3\|= \sum_{j=1}^{3} \|X_i,U_j\| \geqslant \min \{0+4+4,1+3+3\}=7$ 
by (9) and (10) of Lemma \ref{key1}. Therefore $6\Delta \geqslant \sum_{j=1}^{3} %
(\deg(u_j)+\deg(v_j)) = \sum_{i=1}^{r}\| X_i, U_1 \cup U_2 \cup U_3 \| + 
\sum_{i=1}^{s}\| U_i, U_1 \cup U_2 \cup U_3 \| \geqslant 7m+6(r-m)+6(s-1)= 6\Delta 
+m \geqslant 6 \Delta +1$ by (7) of Lemma \ref{key1}, a contradiction.

{\bf Case 3.} \ There is a unique singleton color class, i.e., $t=1$.
 
Since $|G|=3r+2s+1=3\Delta +1-s$, we have $s=3\Delta +1-|G|\geqslant 2$.

{\bf Subcase 3.1.}\ There exists $h$ such that $\|w_1,X_h\|=0$.

Let $A=\{X_h\mid \|w_1,X_h\|=0\}$ and $a=|A| \geqslant 1$. Pick an arbitrary 
pair of distinct $i$ and $j$. Then $4\Delta \geqslant \deg(u_i)+\deg(v_i)+
\deg(u_j)+\deg(v_j) =\sum_{X_h \in A}\|X_h, U_i \cup U_j\| + \sum_{X_h \not 
\in A}\|X_h, U_i \cup U_j\| + \sum_{h=1}^{s}\|U_h, U_i \cup U_j\| + \|w_1, 
U_i \cup U_j\| \geqslant 6a+4(r-a)+4(s-1)+ 2= 4\Delta +2(a-1)$ by (2), (6), 
(7) and (11) of Lemma \ref{key1}. Thus $a\leqslant 1$, and hence $a=1$, say 
$\|w_1,X_1\|=0$. Moreover, $\deg(u_j)=\deg(v_j)=\Delta$, $\|U_j,U_{j'}\|=2,
\|w_1,U_j\|=1$ and $\|X_1,U_j\|=3$. For each $j$, let 
\[
B_j^0=\{X_i\mid \|X_i,U_j\|=0\} \mbox{ and } b_j^0=|B_j^0|; 
\]
\[
B_j^1=\{X_i\mid \|X_i,U_j\|=1\} \mbox{ and } b_j^1=|B_j^1|.
\]
All $B_j^0$'s are mutually disjoint by (9) of Lemma \ref{key1}. All $B_j^1$'s 
are mutually disjoint by (10) of Lemma \ref{key1}. If $X_i\in B_j^0$, 
then $\|w_1,X_i\|=\|w_1,X_i \cup U_j\|-\|w_1,U_j\| \geqslant 4-1=3$ by (9) 
of Lemma \ref{key1}. Let ${\cal B}_0$ denote $\cup_{j=1}^sB_j^0$. Then 
$\Delta \geqslant \deg(w_1) = \sum_{X_i \in {\cal B}_0} \|w_1,X_i\|  + 
\sum_{X_i \not \in {\cal B}_0} \|w_1,X_i\| + \sum_{i=1}^{s}\|w_1,U_i\| 
\geqslant 3{\sum_{j=1}^{s}b_j^0} + (r-1-{\sum_{j=1}^{s}b_j^0})+s=\Delta 
+2{\sum_{j=1}^{s}b_j^0}-1$, or $2{\sum_{j=1}^{s}b_j^0}\leqslant 1$. Hence, 
$b_j^0=0$ for all $j$.

Let ${\cal B}_1$ denote $\cup_{j=1}^sB_j^1$. For an arbitrary $j$, (6), (7) 
and (10) of Lemma \ref{key1} imply that
\begin{doublespace}
\[
\begin{array}{rcl}
2\Delta &=& \deg(u_j)+\deg(v_j) \\
&=& \sum_{X_i \in B_j^1}\|X_i,U_j\| + \sum_{X_i \in {\cal B}_1 
	\setminus B_j^1}\|X_i,U_j\| \\
& & +\sum_{X_i \not \in {\cal B}_1}\|X_i,U_j\| + \sum_{j'=1}^{s} %
	\|U_j,U_{j'}\| + \|w_1,U_j\| \\
&\geqslant& b_j^1  + 3\sum_{{j'=1 \atop j' \neq j}}^sb_{j'}^1 + 3 
	+ 2(r-1- \sum_{j'=1}^{s}b_{j'}^1) + 2(s-1) + 1 \\
&=& 2\Delta +\sum_{{j'=1 \atop j' \neq j}}^sb_{j'}^1-b_j^1,
\end{array}
\] 
\noindent
equivalently, $\sum_{{j'=1 \atop j' \neq j}}^sb_{j'}^1\leqslant b_j^1.$
\end{doublespace}
By symmetry, we have either (i) $b_j^1=0$ for all $j$, or (ii) $s=2$ and 
$b_1^1=b_2^1=b>0$. In either case, for an arbitrary pair of distinct $j$ 
and $j'$, $\|X_i,U_j\|=3$ if $X_i\in B_{j'}^1$; $\|X_i,U_j\|=2$ and 
$G[X_i\cup U_j\cup U_{j'}]= K_1\cup 2K_3$ if $X_i \not \in {\cal B}_1$. 

Consider the case $b_j^1=0$ for all $j$. By (12) of Lemma \ref{key1}, 
$\|X_i,u_1\|=\|X_i,v_1\|=\|u_1,U_j\|=\|v_1,U_j\|=1$ for all $i>1$ and $j>1$. 
Then $\Delta=\deg(u_1)=\|u_1,w_1\cup X_1\|+\sum_{i=2}^{r}\|u_1, X_i\|+
\sum_{i=1}^{s}\|u_1,U_i\|=\|u_1,w_1\cup X_1\|+(r-1)+(s-1)=\|u_1,w_1\cup X_1\|+
\Delta -2$, hence $\|u_1,w_1\cup X_1\|=2$. Similarly,  $\|v_1,w_1\cup X_1\|=2$. 
These are impossible since $\|u_1,w_1\cup X_1\| 
\geqslant 3$ or $\|v_1,w_1\cup X_1\| \geqslant 3$ by (6) of Lemma \ref{key1}. 

Consider the case $s=2$ and $b_1^1=b_2^1=b>0$. Assume $j=1$ or $2$. Then 
$\|X_i,U_j\|=3$ if $X_i \in B_{3-j}^1$ by (10) of Lemma \ref{key1} and 
$G[w_1\cup U_1\cup U_2]=K_2 \cup K_3$ by (13) of Lemma \ref{key1}. We may 
let $G[w_1\cup U_1\cup U_2]=\{w_1u_1u_2w_1,v_1v_2\}$. Let 
\[
D_1=\{X_i\in B_1^1\mid \|X_i,v_1\|=1\} \mbox { and } |D_1|=d_1;
\]
\[
D_2=\{X_i\in B_2^1\mid \|X_i,v_2\|=1\} \mbox { and } |D_2|=d_2.
\]
Note that $D_1$ and $D_2$ are disjoint by (10) of Lemma \ref{key1}. Now suppose 
that $X_i\in D_j$ with $x_i \sim v_j$. If $w_1 \not \sim y_i$, then $G$ would 
have more than $r$ 3-independent sets since $w_1\cup X_i\cup U_j$ can be partitioned 
into independent sets $\{w_1,v_j,y_i\}$ and $\{u_j,x_i,z_i\}$. Hence, $w_1 \sim y_i$. 
Similarly, $w_1 \sim z_i$, $u_{3-j} \sim y_i$ and $u_{3-j} \sim z_i$. If $v_{3-j} 
\not \sim x_i$, then $G$ would have more than $r$ 3-independent sets since $X_i\cup 
U_1\cup U_2$ can be partitioned into independent sets $\{u_{3-j}\}$, $\{v_j,y_i,z_i\}$ 
and $\{u_j,x_i,v_{3-j}\}$. Hence, $v_{3-j} \sim x_i$. Then $\|w_1,X_i\| \geqslant 2$, 
$\|u_{3-j},X_i\|=2$ and $\|v_{3-j},X_i\|=1$. By the same argument, if $X_i\in B_j^1
\setminus D_j$ with $u_j \sim x_i$, then $u_{3-j} \sim x_i$, $v_{3-j} \sim y_i$ and 
$v_{3-j} \sim z_i$. Thus $\|u_{3-j},X_i\|=1$ and $\|v_{3-j},X_i\|=2$. 

Since $\Delta \geqslant \deg(w_1) \geqslant 2d_1+2d_2+(r-1-d_1-d_2)+s=\Delta 
+d_1+d_2-1$, we have $d_1+d_2\leqslant 1$. 

If $d_1+d_2=0$, then $d_1=d_2=0$. It follows that $\Delta=\deg(u_1)\geqslant 
\|u_1,w_1\cup X_1\|+(r-1)+(s-1)=\|u_1,w_1\cup X_1\|+\Delta -2$ and $\deg(v_1)
\geqslant \|v_1,w_1\cup X_1\|+ 2b_{2}^1+(r-1-b_{1}^1-b_{2}^1)+(s-1)=\|v_1,w_1
\cup X_1\|+\Delta -2$. Hence $\|u_1,w_1\cup X_1\| \leqslant 2$ and $\|v_1,w_1
\cup X_1\| \leqslant 2$, contradicting (6) of Lemma \ref{key1}. 

If $d_1+d_2=1$, say $d_1=1$ and $d_2=0$, then $\Delta=\deg(v_1)=\|v_1,w_1
\cup X_1 \|+1+2b_2^1+(r-1-b_1^1-b_2^1)+(s-1)=\|v_1,w_1\cup X_1\|+\Delta -1$, 
hence $\|v_1,w_1\cup X_1\|= 1$. Similarly, $\Delta=\deg(u_2)=\|u_2,w_1\cup X_1\|
+2+(b_1^1-1)+b_2^1+(r-1-b_1^1-b_2^1)+(s-1) =\|u_2,w_1\cup X_1\|+\Delta -1$, 
hence $\|u_2,w_1\cup X_1\|=1$. Since $G[w_1\cup U_1\cup U_2]=\{w_1u_1u_2w_1,
v_1v_2\}$, we have $\|v_1,X_1\|=\|v_1,w_1\cup X_1\|-\|v_1,w_1\|=1$ and $\|u_2,X_1\|
=\|u_2,w_1\cup X_1\|-\|u_2,w_1\|=1-1=0$. Hence, there exists some vertex $\alpha 
\in X_1$ such that $\alpha \not \sim v_1$ and $\alpha \not \sim u_2$. Since $w_1
\cup X_1\cup U_1\cup U_2$ can be partitioned into independent sets $\{u_1,v_2\}$, 
$\{v_1,u_2,\alpha \}$ and $w_1\cup (X_1\setminus \{\alpha \})$, $G$ has more than 
$r$ 3-independent sets, a contradiction.

{\bf Subcase 3.2.}\ For all $i$, $\|w_1,X_i\| \geqslant 1$.

By (2) of Lemma \ref{key1}, $\Delta\geqslant \deg(w_1)\geqslant r+s=\Delta$. 
Thus $\deg(w_1)=\Delta$ and $\|w_1,X_i\|=\|w_1,U_j\|=1$ for all $i$ and $j$. 
We may let $N(w_1)=\{x_1,\ldots ,x_r,u_1,\ldots ,u_s\}$. If $v_j \not \sim 
v_{j'}$ for some pair of distinct $j$ and $j'$, then $\{w_1,v_j,v_{j'}\}$ 
would be a 3-independent set disjoint from all $X_i$'s, a contradiction.
It follows that $\{v_1,\ldots ,v_s\}$ forms a $K_s$.

We shall establish a sequence of claims in order to show that Subcase 3.2 also 
leads to a contradiction. In the course of proving the claims, we derive one of 
the following two consequences by negating each of the claims.

(A)\  A new maximal $[r,s,t]$-coloring of $G$ is obtained such that the unique 
singleton color class is independent of some color class of size 3, i.e., Subcase 
3.1 holds.  

(B)\  More than $r$ 3-independent sets are constructed. 
 
Clearly, both (A) and (B) imply contradictions, and hence the original claims 
are true.

{\bf Claim 1.}\ For all $i$, $i'$ ($i \neq i'$) and $j$, $\deg(x_i)=\Delta$ 
and $\|x_i,X_{i'}\|=\|x_i,U_j\|=1$.

If $\|x_i,X_{i'}\|=0$ for some distinct $i$ and $i'$, then (A) occurs since 
$w_1 \cup X_i \cup X_{i'}$ can be partitioned into independent sets $\{x_i\}$, 
$X_{i'}$ and $\{w_1,y_i,z_i\}$. Hence, $\|x_i,X_{i'}\| \geqslant 1$ for all 
distinct $i$ and $i'$. If $\|x_i,U_j\|=0$ for some $i$ and $j$, then (B) occurs 
since $w_1\cup X_i \cup U_j$ can be partitioned into 3-independent sets 
$\{x_i,u_j,v_j\}$ and $\{w_1,y_i,z_i\}$. Hence, $\|x_i,U_j\| \geqslant 1$ 
for all $i$ and $j$. Therefore, $\Delta\geqslant \deg(x_i)\geqslant (r-1)+s+1=
\Delta$ and the claim is true.

{\bf Claim 2.}\ For all $i$, $j$ and $j'$ ($j \neq j'$), $\deg(u_j)=\Delta$ 
and $\|u_j,X_i\|=\|u_j,U_{j'}\|=1$.

If $\|u_j,X_i\|=0$ for some $i$ and $j$, then (A) occurs since $w_1\cup X_i\cup 
U_j$ can be partitioned into independent sets $\{u_j\}$, $X_i$ and $\{w_1,v_j\}$.
Hence, $\|u_j,X_i\| \geqslant 1$ for all $i$ and $j$. By (7) of Lemma \ref{key1}, 
$\|u_j,U_{j'}\| \geqslant 1$ for all distinct $j$ and $j'$. Therefore, $\Delta
\geqslant \deg(u_j)\geqslant r+(s-1)+1=\Delta$ and  the claim is true.

{\bf Claim 3.}\  For all $i$ and $j$, $x_i \sim u_j$.

Suppose on the contrary that $x_p \not \sim u_q$ for some $p$ and $q$. By Claim 1, 
$x_p \sim v_q$. By Claim 2, we may assume that $u_q \sim y_p$. We now prove the 
following four statements.

{\bf (3.1)}\ We have $\deg(y_p)=\Delta$, $\|y_p,X_i\|=1$ for all $i \neq p$ and 
$y_p \sim v_j$ for all $j$.

If $\|y_p,X_i\|=0$ for some $i \neq p$, then (A) occurs since $w_1\cup X_p \cup 
X_i\cup U_q$ can be partitioned into independent sets $\{y_p\}$, $X_i$, 
$\{u_q,x_p,z_p\}$ and $\{w_1,v_q\}$. Hence, $\|y_p,X_i\| \geqslant 1$ for all 
$i \neq p$. If $y_p \not \sim v_j$ for some $j$, then (B) occurs since disjoint 
3-independent sets $\{u_q,x_p,z_p\}$ and $\{w_1,v_j,y_p\}$ are included in $w_1 
\cup X_p \cup U_q \cup U_j$. Hence, $y_p \sim v_j$ for all $j$. Therefore, $\Delta
\geqslant \deg(y_p)\geqslant (r-1)+s+1=\Delta$ and the statement is true.  

{\bf (3.2)}\ We have $\deg(v_q)=\Delta$ and $\|v_q,X_i\|=1$ for all $i \neq p$.

If $\|v_q,X_i\|=0$ for some $i \neq p$, then (A) occurs since $w_1\cup U_q\cup 
X_p \cup X_i$ can be partitioned into independent sets $\{v_q\}$, $X_i$, 
$\{u_q,x_p,z_p\}$ and $\{w_1,y_p\}$. Hence, $\|v_q,X_i\| \geqslant 1$ for all 
$i \neq p$. Since $v_q$ is adjacent to $x_p$, $y_p$ and $v_j$, $\Delta \geqslant 
\deg(v_q) \geqslant 2+(r-1)+(s-1)=\Delta$ and the statement is true. 

{\bf (3.3)}\ For all $j \neq q$, $x_p \sim u_j$.

Suppose $x_p \not \sim u_j$ for some $j \neq q$. By  (3.1), $y_p \sim v_j$ 
and $\|y_p, U_h\|=1$ for all $h$, and hence $y_p \not \sim u_j$. By (3.2), 
$v_q \not \sim z_p$ since it is known that $v_q \sim x_p$. Then (B) occurs 
since disjoint 3-independent sets $\{x_p,y_p,u_j\}$ and $\{w_1,z_p,v_q\}$ 
are included in $w_1\cup X_p \cup U_q\cup U_j$. 

{\bf (3.4)}\ For all $j \neq q$, $u_q \sim u_j$.

Suppose $u_q \not \sim u_j$ for some $j \neq q$. Since $\{v_1,\ldots ,v_s\}$ 
forms a $K_s$, it follows from (3.2) that $v_q \not \sim u_j$. Then (B) occurs 
since $\{u_q,v_q,u_j\}$ is a 3-independent set disjoint from all $X_i$'s. 

Statements (3.1) to (3.4) have been established. We may choose any $q'$ 
different from $q$. By Claim 1, Claim 2, (3.3) and (3.4), $v_{q'} \not 
\sim x_p$ and $v_{q'} \not \sim u_q$. Then (B) occurs since disjoint 
3-independent sets $\{x_p,u_q,v_{q'}\}$ and $\{w_1,y_p,z_p\}$ are included 
in $w_1\cup X_p\cup U_q\cup U_{q'}$. Claim 3 is therefore proved.

{\bf Claim 4.}\ For all $i$ and $j$, $u_i \sim u_j$.

Suppose that $u_i \not \sim u_j$ for some $i$ and $j$. By Claims 1, 2 and 3, 
$x_1 \not \sim v_i$ and $\{y_1,z_1,u_i,u_j\}$ is a 4-independent set. Then 
(A) occurs since $w_1\cup X_1\cup U_i\cup U_j$ can be partitioned into 
independent sets $\{u_i\}$, $\{y_1,z_1,u_j\}$, $\{w_1,v_j\}$ and $\{x_1,v_i\}$. 
 
We have established Claims 1 to 4 and are ready
to show that a contradiction can be derived from Subcase 3.2. By Claims 3 and 4, 
$x_i \not \sim x_{i'}$ for some $i$ and $i'$ since $N(w_1)=\{x_1,\ldots ,x_r,u_1,
\ldots ,u_s\}$ and no component of $G$ is a $K_{\Delta +1}$. Then it follows from 
Claims 1, 2, 3 and 4 that (B) occurs since disjoint 3-independent sets $\{w_1,y_i,
z_i\}$, $\{u_1,y_{i'},z_{i'}\}$ and $\{v_1,x_i,x_{i'}\}$ are included in $w_1\cup 
X_i\cup X_{i'}\cup U_1$. 

Now, we have refuted Cases 1, 2 and 3 since each of them led to contradictions. 
Therefore, $G$ cannot have a maximal $[r,s,t]$-coloring with $r+s+t= \Delta +1$ 
and the proof is complete.
\mbox{}\hfill\rule{0.5em}{0.809em}

\bigskip

{\bf Acknowledgment.}\  
The authors are grateful to Professor Kostochka for directing their attention 
to a recent manuscript \cite{kk14} in which he and Kierstead established the following. 
Let $G$ be a graph with $\chi(G), \Delta(G), |G|/4 \leqslant r$. If $r$ is even 
or $G$ does not contain $K_{r,r}$, then $G$ is equitably $r$-colorable.

\bigskip 

%


\begin{thebibliography}{99}
%

\bibitem{brooks}
R. L. Brooks, 
On colouring the nodes of a network, 
Proc. Cambridge Philos. Soc. 37 (1941), 194--197. 

\bibitem{clw}
B.-L. Chen, K.-W. Lih and P.-L. Wu,  
Equitable coloring and the maximum degree,
European J. Combin. 15 (1994), 443--447.

\bibitem{hs}
A. Hajnal and E. Szemer\'{e}di, 
``Proof of a conjecture of Erd\H{o}s'',
Combinatorial Theory and Its Applications,
Vol. II, Colloq. Math. Soc. Janos Bolyai 4,
P. Erd\H{o}s, A. R\'{e}nyi and V. T. S\'{o}s, (Editors), 
North-Holland, Amsterdam (1970), pp. 601--623.

\bibitem{kk11}
H. A. Kierstead and A. V. Kostochka, 
Every 4-colorable graph with maximum degree 4 has 
an equitable 4-coloring, 
J. Graph Theory 71 (2012), 31--48.


\bibitem{kk14}
H. A. Kierstead and A. V. Kostochka, 
A refinement of a result of Corr\'{a}di and Hajnal,
manuscript (2014).

\bibitem{lih_new}
K.-W. Lih,  
``Equitable coloring of graphs'',
Handbook of Combinatorial Optimization, 2nd ed., 
P. M. Pardalos, D.-Z. Du and R. L. Graham (Editors),
Springer, New York, (2013), pp. 1199--1248. 

\bibitem{lw96}
K.-W. Lih and P.-L. Wu, 
On equitable coloring of bipartite graphs,
Discrete Math. 151 (1996), 155--160.

\bibitem{m}
W. Meyer,  
Equitable coloring, 
Amer. Math. Monthly, 80 (1973), 920--922.

\end{thebibliography}
\end{document}